\numberwithin{equation}{section}
\theoremstyle{plain}
\newtheorem{theorem}{Theorem}
\newtheorem{proposition}{Proposition}
\newtheorem{lemma}{Lemma}
\theoremstyle{definition}
\begin{document}

\begin{frontmatter}
\title{Large deviations for Independent Metropolis Hastings and Metropolis-adjusted Langevin algorithm}
\runtitle{Large deviations for IMH and MALA}

\begin{aug}
\author[A]{\inits{FM}\fnms{Federica}~\snm{Milinanni}\ead[label=e1]{federica\_milinanni@brown.edu}}
\author[B]{\inits{PN}\fnms{Pierre}~\snm{Nyquist}\ead[label=e2]{pnyquist@chalmers.se}}
\address[A]{Institute for Computational and Experimental Research in
Mathematics (ICERM), Brown University, Providence, RI 02903, USA\printead[presep={,\ }]{e1}}

\address[B]{Department of Mathematical Sciences, Chalmers University of Technology and University of Gothenburg, 412 96 Gothenburg, Sweden\printead[presep={,\ }]{e2}}
\end{aug}

\begin{abstract}
In this paper, we prove large deviation principles for the empirical measures associated with the Independent Metropolis Hastings (IMH) sampler and  the Metropolis-adjusted Langevin Algorithm (MALA). These are the first large deviation results for empirical measures of Markov chains arising from specific Metropolis-Hastings methods on a continuous state space. Moreover, we show that the existing large deviation framework, that we developed in a previous work (Milinanni and Nyquist, 2024) does not cover the Random Walk Metropolis sampler, even in cases when the underlying Markov chain is geometrically ergodic.
\end{abstract}

\begin{keyword}
\kwd{Empirical measure}
\kwd{Large deviations}
\kwd{Lyapunov function}
\kwd{Markov chain Monte Carlo}
\kwd{Metropolis-Hastings}
\end{keyword}

\end{frontmatter}


\section{Introduction}
\label{sec:intro}
Markov chain Monte Carlo (MCMC) methods have become ubiquitous across scientific disciplines as the standard tool for sampling from a given probability distribution. The canonical MCMC method, and the central building block for many of the modern methods now being used, is the Metropolis-Hastings (MH) algorithm \cite{metropolis1953equation, hastings1970monte}; for formulations on more general state spaces, see, e.g., \cite{Tierney98, G-HKM23}. Examples of some of the most popular methods built on the Metropolis-Hastings algorithm are the Independent Metropolis-Hastings (IMH) algorithm  \cite{Tie94, robert2004monte}, the Random Walk Metropolis (RWM) algorithm  \cite{mengersen1996rates} and the Metropolis-adjusted Langevin algorithm (MALA) \cite{besag1994comments, roberts1996exponential, roberts1998optimal}.

When trying to sample from a target measure $\pi$ using MCMC, using a standard method, such as Metropolis-Hastings, can come with slow convergence and/or great computational cost for a desired accuracy. Performance analysis of MCMC methods has therefore become an important topic at the intersection of applied probability and (computational) statistics. A first step towards understanding the performance of a given MCMC method is an analysis of the convergence to the target $\pi$; classical tools include the spectral gap of the associated dynamics, mixing times, asymptotic variance and functional inequalities (Poincar\'e, log-Sobolev); see, e.g., \cite{bedard2008optimal, rosenthal2003asymptotic, DHN00, franke2010behavior, frigessi1993convergence}. Some important results on properties and performance of Metropolis-Hastings algorithms are given in \cite{mengersen1996rates, roberts1996exponential, roberts1996geometric, GGR97, roberts1997geometric, RR01, CRR05, G-HKM23}; see also the references therein. Adding to these classical results, and overall to the toolbox for analysing MCMC methods, in a series of recent papers \cite{ALP+22a, ALP+23a, PSW24}, Andrieu, Lee, Power and Wang and co-authors use weak Poincar\'e inequalities to study convergence of discrete time Markov chains, aiming specifically at analysing various MCMC algorithms. 

An important quantity in the study of MCMC methods, and for comparing their performance, is the convergence rate of time averages. At the heart of MCMC is the property that for an ergodic chain $\{ X_i \} _{i \geq 0}$ with invariant distribution $\pi$ and an observable $f \in L ^1 (\pi)$, the $n$-step averages $\frac{1}{n} \sum _{i=0} ^{n-1} f(X_i)$ can be used to approximate $\mathbb{E} _\pi [f(X)]$. Such averages can in turn be viewed as integrals of $f$ with respect to the empirical measures $\frac{1}{n} \sum _{i=0} ^{n-1}\delta_{X_i}(\cdot)$ of the Markov chain. The convergence of time averages is therefore intrinsically linked to the convergence of the empirical measures to $\pi$, and the latter can thus be used to understand the performance of MCMC methods.

Starting with the study of parallel tempering in \cite{PDD+11, dupuis2012infinite}, we have seen an increased interest in using the theory of large deviations for empirical measures to study MCMC methods. In addition to the original work on parallel tempering/infinite swapping by Doll, Dupuis and co-authors, the following are some notable examples of using empirical measure large deviations in the MCMC setting: in \cite{rey2015irreversible, rey2016improving} Rey-Bellet and Spiliopoulos study the convergence properties of certain reversible and irreversible Markov processes using empirical measure large deviations, showing improved performance of non-reversible methods by probing the associated rate function; in \cite{bierkens2016non} Bierkens considers a continuous-time Metropolis-Hastings algorithm on a finite state space and uses large deviations to prove improved convergence for an irreversible version; for parallel tempering and infinite swapping, in \cite{doll2018large} Doll, Dupuis and Nyquist use large deviation rate functions, combined with associated stochastic control problems, to analyse the convergence properties of the methods; in \cite{bierkens2021large} Bierkens, Nyquist and Schlottke use empirical measure large deviations to study the zig-zag process, obtaining the first optimality result for the so-called switching rate in one dimension; in \cite{DW22} Dupuis and Wu use empirical measure large deviations to solve a long-standing open problem on temperature selection for parallel tempering and infinite swapping in the low-temperature regime.

In our recent work \cite{milinanni2024large}, with a performance analysis of methods built on the Metropolis-Hastings mechanism in mind, we derive a large deviation principle (LDP) for the empirical measures of Metropolis-Hastings chains on a continuous state space; the results hold for a general collection of discrete-time Markov processes whose transition kernels take on a particular form, with Metropolis-Hasting as a special case. This is the first large deviation result that covers Metropolis-Hastings chains on general state spaces, and \cite{milinanni2024large} therefore opens the possibility for new analysis and insights into Metropolis-Hastings-type methods. The results are of a general type, in that they hold for any choice of proposal distribution that satisfies certain assumptions; the critical assumption is the existence of a particular type of Lyapunov function (see Assumption \ref{ass:compactSpaceOrLyapunov} in Section \ref{sec:LDP}). However, establishing whether or not such a function exists for specific choices of proposal distribution is a highly challenging task. Therefore, despite the advances made in \cite{milinanni2024large}, for specific choices of proposal distribution, i.e., for specific MCMC methods, whether or not an LDP holds remained an open question.

In this work we answer the question for IMH and MALA, and partly for RWM, covering three of the most common MCMC methods based on the Metropolis-Hastings algorithm. The three methods are staples within MCMC and there is a vast literature on their properties and use in various applications—the following are some of the most relevant existing (theoretical) results for the three methods: \cite{mengersen1996rates, AP07, Wang22, BJ24} (IMH); \cite{roberts1996exponential, roberts1998optimal, RR01, CRR05, BLM14} (MALA); \cite{mengersen1996rates, roberts1996geometric, JH2000, GGR97, MPS12, HSV14, ALP+22a} (RWM).

We show that for IMH and MALA, under conditions that ensure the corresponding Markov chain is geometrically ergodic (except for a boundary case for MALA, see Section \ref{sec:conjecture}), an LDP holds for the underlying empirical measures. The main results are Theorems \ref{thm:LDP_indep} and \ref{thm:LDP_MALA}, stating an LDP for IMH and MALA, respectively. These are the first LDPs for particular instances of Metropolis-Hastings algorithms when the state space is a (uncountable) subset of $\mathbb{R}^d$; for the IMH, the proof is rather straightforward, whereas for MALA it becomes rather involved to show the existence of a suitable Lyapunov function (via the conditions of Lemma \ref{lem:rto0}). Moreover, in Section \ref{sec:RWM} we show that the large deviation framework for MH chains developed in \cite{milinanni2024large} does not cover RMW---the main result in this direction is Proposition \ref{prop:RWMnoLyapunov}, stating that the type of Lyapunov function needed for the results in \cite{milinanni2024large} to apply cannot exist for any RWM algorithm, even when the underlying chain is geometrically ergodic.

The combination of the current paper and the results in \cite{milinanni2024large} marks the first step towards a more general theory of large deviations for Metropolis-Hastings chains. Future work includes extending our results to more general state spaces, as well as more general conditions and, by extension, (other) specific proposal distributions.

The remainder of the paper is organised as follows. In Section \ref{sec:preliminaries} we provide the preliminaries needed for the subsequent sections: notation and definitions (Section \ref{sec:notationDef}); a description of the Metropolis-Hastings algorithm (Section \ref{sec:MH}); an overview of large deviations for empirical measures, including the results 
of \cite{milinanni2024large} (Section \ref{sec:LDP}) and a brief remark on how large deviation results can be used in the analysis of MCMC methods. In Section \ref{sec:propLyapRd}, we take a closer look at Assumption \ref{ass:compactSpaceOrLyapunov} and obtain an equivalent version of Part (b) of this assumption (see Lemma \ref{lem:rto0}), tailored to the MCMC setting. Section \ref{sec:main} then contains the main theoretical results of the paper: we consider large deviations for the empirical measures of Markov chains arising from the three common MCMC methods IMH (Section \ref{sec:IMH}), MALA (Section \ref{sec:MALA}) and RWM (Section \ref{sec:RWM}). Based on the results in Section \ref{sec:main}, and previous large deviation work for Markov chains, such as \cite{KM03, KM05}, we give a brief summary and discussion about LDPs for Metropolis-Hastings chains and geometric ergodicity in Section \ref{sec:conjecture}. The paper ends with technical proofs of the main results in Section \ref{sec:proofs}, with the longer proofs for MALA provided in the \hyperref[appn]{Appendix}.

\section{Preliminaries}
\label{sec:preliminaries}
\subsection{Notation and definitions}
\label{sec:notationDef}
Throughout the paper we work with some probability space $(\Omega, \mathcal{F}, \mathbb{P})$. The state space of the stochastic processes under consideration is the $d$-dimensional Euclidean space $\mathbb{R}^d$ for $d\in\mathbb{N}$, which we will often denote by $S$ for ease of notation. For $x,y\in S$, we denote by $\langle x,y\rangle$ the scalar product between the two vectors, and $|x|=\sqrt{\langle x,x\rangle}$ is the Euclidean norm of $x$.

Given a set $A\subseteq S$, let $-A=\{x\in S\,:\,-x\in A\}$ and let $A^\circ$ be the interior of $A$. We denote by $x\mapsto I\{x\in A\}$ the \textit{indicator function} of $A$. 

We denote by $\mathcal{P}(S)$ the space of probability measures on $S$, and by $\mathcal{B}(S)$ the Borel $\sigma$-algebra on $S$. We endow $\mathcal{P}(S)$ with the topology corresponding to the weak convergence of probability measures.
If not otherwise specified, \textit{almost all} and \textit{almost surely} refer to the Lebesgue measure on $S$, which is denoted by $\lambda$; for integration with respect to $\lambda$, we use the standard notation $dx$ for $\lambda (dx)$. For a probability measure $\mu(dx)\in\mathcal{P}(S)$ that is absolutely continuous with respect to $\lambda$ ($\mu\ll\lambda$), we abuse notation slightly and denote by $\mu(x)$ its density with respect to $\lambda$, so that $\mu(dx)=\mu(x)dx$. Given $\gamma\in\mathcal{P}(S^2)$, let $[\gamma]_1$ and $[\gamma]_2$ denote the first and second marginal of $\gamma$, respectively. For $\mu\in\mathcal{P}(S)$, define
\begin{align}
\label{eq:defA}
    A(\mu)=\{\gamma\in\mathcal{P}(S^2)\,:\,[\gamma]_1=[\gamma]_2=\mu\}.
\end{align}

For probability measures $\mu,\nu\in\mathcal{P}(S)$, we denote the \textit{total variation distance} between $\mu$ and $\nu$ by $\lVert \mu-\nu\rVert_{TV}=\sup_{A\in\mathcal{B}(S)}|\mu(A)-\nu(A)|$.
For $\nu \in \mathcal{P}$, the \textit{relative entropy} (with respect to $\nu$) is defined as the map $R(\cdot \parallel \nu) : \mathcal{P}(S) \to [0, \infty]$ given by
\begin{align*}
    R(\mu\parallel \nu)=\begin{cases}
        \int_S\log\left(\frac{d\mu}{d\nu}\right)d\mu,\qquad \mu\ll\nu,\\
        +\infty,\qquad\qquad\text{otherwise}.
    \end{cases}
\end{align*}
Here $d\mu/d\nu$ denotes the \textit{Radon-Nikodym} derivative of $\mu$ with respect to $\nu$ (when well-defined).

For a measurable space $(Y, \mathcal{Y})$, let $q(y,dx)$ be a collection of probability measures on $S$ parametrized by $y \in Y$: $q(y, \cdot) \in \mathcal{P(S)}$ for $y \in Y$. Such a $q$ is called a \textit{stochastic kernel} on $S$ given $Y$ if, for every $A \in \mathcal{B} (S)$, $y \mapsto q(y,A) \in [0,1]$ is a measurable function. 
The \textit{transition kernel} of a Markov chain $\{ X_i \} _{i \geq 0}$ taking values in $S$ is a stochastic kernel $q$, such that the conditional distribution of $X_{i+1}$ given $X_{i}$ is $q(X_{i},\cdot)$. The notation $q^{j}(x,\cdot)$ is used for the $j$-th iterate of the transition kernel, i.e.,
\begin{equation*}
    q^{j}(x,A)=\mathbb{P}(X_{i+j}\in A|X_i=x).
\end{equation*}
Given a transition kernel $q$, a set $C\subset S$ is called \textit{small} if there exist $j\in \mathbb{N}$, $\varepsilon>0$ and a probability measure $\nu\in\mathcal{P}(S)$ such that 
\begin{equation}
\label{eq:small}
    q^j(x,A)\ge \varepsilon\nu(A),
\end{equation}
for all $x\in C$ and all $A\in\mathcal{B}(S)$. For a measure $\mu \in \mathcal{P}(S)$ and a transition kernel $q(x, dy)$, we say that $\mu$ is \textit{invariant} for $q$, or for the corresponding Markov chain, if for all $A \in \mathcal{B} (S)$,
\begin{align*}
    \mu (A) = \int _S q(x, A) \mu (dx).
\end{align*}
We say that the Markov chain $\{ X_i \}_i$ with $q$ as transition kernel is \textit{uniformly ergodic} if there exist $R<\infty$ and $r>1$ such that 
\begin{equation*}
    \lVert q^{i}(x_0,\cdot)-\pi\rVert_{TV}\le Rr^{-i}, \ \ \forall x_0 \in S,
\end{equation*}
and \textit{geometrically ergodic} if there exist $R:S\to(0,\infty)$ and $r>1$ such that, for $\pi$-almost every $x_0\in S$, 
\begin{equation*}
     \lVert q^{i}(x_0,\cdot)-\pi\rVert_{TV}\le R(x)r^{-i}.
\end{equation*}
We recall that (see, e.g., \cite{meyn2009markov}) geometric ergodicity is equivalent to the drift and minorization conditon: the existence of a Lyapunov function $V:S\to[1,\infty)$, $\lambda<1$, $b<\infty$ and a small set $C\subset S$ such that 
\begin{equation}
    \label{driftCondition}
        \int_SV(y)q(x,dy)\le \lambda V(x)+bI\{x\in C\}.
 \end{equation}

Lastly, for $\{X_i\} _{i \geq 0}$ a Markov chain on $S$, for each $n \in \mathbb{N}$, the associated \textit{empirical measure} $L^n\in\mathcal{P}(S)$ is defined as
\begin{align}
\label{eq:empMeas}
    L^n (\cdot) = \frac{1}{n} \sum _{i=0} ^{n-1} \delta _{X_i} (\cdot).
\end{align}

\subsection{The Metropolis-Hastings algorithm}
\label{sec:MH}
We now give a brief description of the Metropolis-Hastings (MH) algorithm, introduced in \cite{metropolis1953equation} and \cite{hastings1970monte}, which provides a way to generate a Markov chain $\{X_i\}_{i\ge0}$ on $S$, with a given target distribution $\pi\in\mathcal{P}(S)$ as its invariant distribution. As indicated in Section \ref{sec:notationDef}, in this paper we consider the state spaces $S = \mathbb{R} ^d$ with $d\in\mathbb{N}$; see \cite{Tierney98} for more general settings.

The key ingredient of the MH algorithm is the \textit{proposal distribution} $J(\cdot|x)\in\mathcal{P}(S)$, which is assumed to be defined for all $x\in S$; with a slight abuse of notation we denote by $\pi(\cdot)$ and $J(\cdot|x)$ both the measures and the corresponding probability density functions. 
Here we consider target and proposal distributions that have a density with respect to Lebesgue measure: $\pi \ll \lambda$ and $J(\cdot | x ) \ll \lambda$ for almost all $x \in S$. 

The MH algorithm is as follows: Assume that at step $i$, the chain is in state $x$, $X_i = x$. A proposal $Y_{i+1}$ for the next value of the chain, $X_{i+1}$, is obtained by sampling from the proposal distribution  $J(\cdot|x)$. This proposal is then accepted or rejected according to the \textit{Hastings ratio}, defined as
\begin{equation}
\label{eq:HastingsRatio}
    \varpi(x,y)=\min\left\{1,\frac{\pi(y)J(x|y)}{\pi(x)J(y|x)}\right\}.
\end{equation}
That is, with probability $\varpi(x,Y_{i+1})$,
 we accept the proposal and set $X_{i+1}=Y_{i+1}$. Otherwise, with probability $1-\varpi(x,Y_{i+1})$, we reject the proposal and set $X_{i+1}=x$; for a more detailed description and discussion see \cite{andrieu2003introduction, robert2004monte} and the references therein.

\subsubsection{The Metropolis-Hastings transition kernel.}
\label{sec:MHtransKer}
In analysing both large deviation and ergodicity properties of MH chains, a central object is the associated transition kernels. For a Markov chain generated via the MH algorithm, the transition kernel, which we henceforth denote by $K$, is of the form
\begin{equation}
\label{eq:decompositionK}
    K(x,dy)=a(x,y)dy+r(x)\delta_x(dy),
\end{equation}
where $a(x,y)$ is given by 
\begin{equation}
    \label{eq:acceptanceDensity}
    a(x,y)=\varpi(x,y)J(y|x).
\end{equation}
This term corresponds to moves that are proposed via $J(\cdot|x)$, and accepted with probability $\varpi(x,y)$. The second term on the right-hand side of \eqref{eq:decompositionK} represents transitions to the current state, i.e., no move. This is caused by the proposed state being rejected, which occurs with probability
\begin{equation}
\label{eq:def_r}
    r(x)=1-\int_S a(x,y)dy.
\end{equation}

We henceforth refer to $K$ on the form \eqref{eq:decompositionK}, for some proposal distribution $J$, as a \textit{Metropolis-Hastings kernel}, or MH kernel. An important observation is that, due to the definition of the Hastings ratio, and by extension the MH kernel $K$, under mild assumptions on $J$, the corresponding Markov chain has $\pi$ as its unique invariant distribution (see, e.g., \cite{robert2004monte}). 

\subsection{Large deviation principle for the empirical measures of MH chains}
\label{sec:LDP}
Consider a Markov chain $\{ X_i \} _{i \geq 0}$ with state space $S$. We define the associated sequence $\{ L ^n \} _{n \geq 1} \subset\mathcal{P}(S)$ of empirical measures as in \eqref{eq:empMeas}.
We say that the sequence $\{ L^n \} _{n \geq 1}$ satisfies a \textit{large deviation principle} (LDP) with \textit{speed} $n$ and \textit{rate function} $I : \mathcal{P}(S) \to [0, \infty]$, if $I$ is lower semi-continuous, has compact sub-level sets, and for any measurable set $A \subset \mathcal{P} (S)$,
\begin{align*}
    - \inf _{\mu \in A ^\circ} I(\mu) &\leq \liminf _{n \to \infty} \frac{1}{n} \log \mathbb{P} (L ^n \in A ^\circ) \leq \limsup _{n \to \infty} \frac{1}{n} \log \mathbb{P} (L ^n \in \bar A ) \leq - \inf _{\mu \in \bar A} I (\mu).    
\end{align*}
These inequalities suggest that, as $n\to\infty$,
\begin{align}
\label{eq:asymptoticA}
    \mathbb{P}(L^n\in A)\simeq \exp\left\{-n\,\cdot\inf_{\mu\in A}I(\mu)\right\}.
\end{align}
To give an idea of how this type of asymptotics can be used in the analysis of MCMC methods, consider the following example: let $B_\varepsilon(\pi)$ be the ball of radius $\varepsilon>0$ centered at $\pi$, the invariant measure of the Markov chain, with respect to some metric on the space $\mathcal{P}(S)$ (e.g., the Lévy-Prohorov metric). Then, \eqref{eq:asymptoticA} implies that
\begin{equation}
\label{eq:convergence_prob}
\mathbb{P}\left(L^n\in B_\varepsilon(\pi)\right)=1-\mathbb{P}\left(L^n\in B^\complement_\varepsilon(\pi)\right) \simeq 1-\exp\left\{-n\,\cdot\inf_{\mu\in B^\complement_\varepsilon(\pi)  }I(\mu)\right\},
\end{equation}
that is, the probability that $L^n$ is ``close'' to $\pi$ (corresponding to $L^n\in B_\varepsilon(\pi)$) approaches 1 at an \textit{exponential rate}, and the larger the quantity $\inf_{\mu\in B^\complement_\varepsilon(\pi)  }I(\mu)$, the faster this convergence is. 

Suppose now that $L^n$ is associated with an MH algorithm, with target measure $\pi$, and that there is a corresponding LDP. From \eqref{eq:convergence_prob}, we deduce that  for small $\delta$, we need to run the MH algorithm for $n\gtrsim -\log\delta/\inf_{\mu \in B^\complement_\varepsilon(\pi)}I(\mu)$ in order to get a sample $\{X_i\}_{i=1}^n$ whose empirical distribution satisfies $\mathbb{P}(L^n\in B_\varepsilon(\pi))\gtrsim 1-\delta$; similar ideas, in the context of importance sampling, can be found in \cite{HultNyquist16, CD18}. 
The large deviation principle indicates that MH algorithms whose empirical measures $\{L^n\}$ satisfy an LDP with a larger value of $\inf_{\mu\in B^\complement_\varepsilon(\pi)  }I(\mu)$ are associated with faster convergence in the sense of \eqref{eq:convergence_prob}. This can be used to compare different versions of the MH algorithm with each other, and to tune MH hyperparameters, as described in \cite{milinanni2024b}.

For a thorough treatment of the theory of large deviations and its applications, well beyond the setting of Markov chains and MCMC considered in this paper, see, e.g., \cite{DZ09, FK06, budhiraja2019analysis} and references therein.

In our recent work \cite{milinanni2024large}, we consider Markov chains $\{X_i\}_{i\ge 0}$ generated via the MH algorithm, described in Section \ref{sec:MH}, on a continuous state space that is a subset of $\mathbb{R}^d$. We prove that, under Assumptions \ref{ass:targetAbsContLambda}-\ref{ass:compactSpaceOrLyapunov}, the sequence of associated empirical measures $\{L^n\}_{n\ge 1}$ satisfies an LDP on $\mathcal{P}(S)$ with speed $n$ and rate function $I: \mathcal{P}(S) \to [0, \infty]$ given by
\begin{align}
\label{eq:rateFunc}
    I(\mu)=\inf_{\gamma\in A(\mu)} R(\gamma \parallel \mu \otimes K),
\end{align}
where $A(\mu)$ is defined in \eqref{eq:defA}. The rate function in \eqref{eq:rateFunc} also admits alternative representations that are not based on relative entropy, see \cite{milinanni2024b}.

The following are the assumptions and main result of \cite{milinanni2024large}.
\begin{enumerate}[label=(A.\arabic*), ref=(A.\arabic*)]
\item{\label{ass:targetAbsContLambda}
    $S$ is an open subset of $\mathbb{R}^d$ and the target probability measure $\pi$ is equivalent to $\lambda$ on $S$ (i.e., $\pi\ll\lambda$ and $\lambda\ll\pi$). The probability density $\pi(x)$ is a continuous function.}
\item{\label{ass:proposalDistributionAbsCont}
    The proposal distribution $J(\cdot|x)$ is absolutely continuous with respect to the target measure $\pi$ (i.e., $J(\cdot|x)\ll\pi$), for all $x\in S$. The probability density  $J(y|x)$ is a continuous and bounded function of $x$ and $y$, and satisfies $ J(y|x)>0, \ \forall (x,y)\in S^2.$}

\item{\label{ass:compactSpaceOrLyapunov}
    There exists a function $U:S\to[0,\infty)$ such that the following properties hold:
    \begin{enumerate}[label=(\alph*)]
        \item{\label{ass:notNegativeInf}} $\inf_{x\in S}\left[U(x)-\log\int_Se^{U(y)}K(x,dy)\right]>-\infty$
        \item{\label{ass:relCompact}} For each $M<\infty$, the set $\left\{x\in S \,:\,U(x)-\log\int_Se^{U(y)}K(x,dy)\le M\right\}$ is a relatively compact subset of $S$.
        \item{\label{ass:supU}} For every compact set $K\subset S$ there exists $C_K<\infty$ such that $\sup_{x\in K}U(x)\le C_K$.
    \end{enumerate}}
\end{enumerate}
\begin{theorem}[Theorem 4.1 in \cite{milinanni2024large}]
\label{thm:LDP_SPA}
Let $\{X_i\}_{i\ge 0}$ be the MH chain from Section \ref{sec:MH} and $K(x,dy)$ the associated MH kernel. Let $\{L^n\}_{n\ge 1}\subset\mathcal{P}(S)$ be the corresponding sequence of empirical measures. Under Assumptions \ref{ass:targetAbsContLambda}-\ref{ass:compactSpaceOrLyapunov}, $\{L^n\}_{n\ge 1}$ satisfies an LDP with speed $n$ and rate function given by \eqref{eq:rateFunc}.
\end{theorem}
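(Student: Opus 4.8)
The plan is to establish the Laplace principle equivalent to the stated LDP and to prove it via the weak convergence (stochastic control) method of Budhiraja and Dupuis \cite{budhiraja2019analysis}, adapted to accommodate the atomic part of the MH kernel. Concretely, one first checks that the candidate rate function $I$ in \eqref{eq:rateFunc} is lower semicontinuous with compact sub-level sets, after which proving the LDP is equivalent to showing that for every bounded continuous $F:\mathcal{P}(S)\to\mathbb{R}$,
\[
    \lim_{n\to\infty} -\frac{1}{n}\log \mathbb{E}\left[e^{-nF(L^n)}\right] = \inf_{\mu\in\mathcal{P}(S)}\left[F(\mu)+I(\mu)\right].
\]

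First I would write down the Dupuis--Ellis variational representation of the left-hand side as an infimum over controlled Markov chains: introducing controlled transition kernels $\bar K_i(\bar X_i,\cdot)\ll K(\bar X_i,\cdot)$ and letting $\bar L^n$ be the empirical measure of the resulting controlled chain, one obtains
\[
    -\frac{1}{n}\log \mathbb{E}\left[e^{-nF(L^n)}\right] = \inf\, \mathbb{E}\left[\frac{1}{n}\sum_{i=0}^{n-1} R\bigl(\bar K_i(\bar X_i,\cdot)\,\|\,K(\bar X_i,\cdot)\bigr) + F(\bar L^n)\right].
\]
The running cost is naturally phrased at the level of the pair empirical measure $\bar\Gamma^n(dx,dy) = \frac{1}{n}\sum_{i} \delta_{\bar X_i}(dx)\,\bar K_i(\bar X_i,dy)$ on $S^2$, whose entropy against $\bar L^n\otimes K$ is exactly the accumulated cost. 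The appearance of $A(\mu)$ and of $\mu\otimes K$ in \eqref{eq:rateFunc} is then the contraction of this pair-level cost onto the first marginal, under the stationarity constraint that the two marginals of any limit of $\bar\Gamma^n$ coincide (the first marginal is $\bar L^n$, the second is the one-step-ahead average, which agree in the limit).

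The two halves are handled separately. For the Laplace lower bound (equivalently, the LDP upper bound) I would prove tightness of $\{\bar\Gamma^n\}$, pass to a subsequential weak limit $\gamma$, and use lower semicontinuity of relative entropy together with Fatou's lemma to bound the liminf of the cost below by $R(\gamma\,\|\,[\gamma]_1\otimes K)$, after verifying $[\gamma]_1=[\gamma]_2$ in the limit; this yields $\liminf \geq \inf_\mu[F(\mu)+I(\mu)]$. This is precisely where Assumption \ref{ass:compactSpaceOrLyapunov} enters: since $S$ is non-compact, tightness of $\{\bar\Gamma^n\}$ is not automatic, and the function $U$, through Parts \ref{ass:notNegativeInf}--\ref{ass:relCompact}, bounds the escape of mass to infinity by the accumulated entropy cost, forcing any control of bounded cost to keep the empirical measure tight. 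For the Laplace upper bound (equivalently, the LDP lower bound) I would, given $\varepsilon>0$, select a near-minimizer $\mu^\ast$ of $F+I$ and a near-optimal $\gamma^\ast\in A(\mu^\ast)$, disintegrate $\gamma^\ast$ into a stationary controlled kernel, run the corresponding controlled chain whose empirical measure concentrates at $\mu^\ast$, and evaluate its cost to obtain $\limsup \leq F(\mu^\ast)+I(\mu^\ast)+\varepsilon$.

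The main obstacle, and the reason existing conditions such as those in \cite{KM03, KM05, budhiraja2019analysis} do not apply directly, is the rejection term $r(x)\delta_x(dy)$ in the decomposition \eqref{eq:decompositionK}: the MH kernel is not absolutely continuous with respect to Lebesgue measure, so the cost $R\bigl(\bar K_i(x,\cdot)\,\|\,K(x,\cdot)\bigr)$ must be analysed against a reference measure carrying an atom at $x$. I expect this to require splitting every controlled kernel into its ``move'' and ``stay'' parts, controlling each against the corresponding component of $K$, and checking that the atomic piece is compatible with both the tightness argument and the limit-identification of the marginals. Carrying this through---so that the singular part destroys neither lower semicontinuity of the cost nor the stationarity constraint $[\gamma]_1=[\gamma]_2$ in the limit---is the technical heart of the argument, and is exactly the point at which the extension of \cite{budhiraja2019analysis} carried out in \cite{milinanni2024large} is needed.
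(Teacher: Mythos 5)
Your proposal follows essentially the same route as the paper's own source for this theorem: the paper does not reprove Theorem \ref{thm:LDP_SPA} but imports it from \cite{milinanni2024large}, which (as described in Sections \ref{sec:intro} and \ref{sec:LDP}) establishes the LDP exactly via the Laplace principle and the weak convergence method of \cite{budhiraja2019analysis}, extended to kernels of the form \eqref{eq:decompositionK} with the atomic rejection part $r(x)\delta_x(dy)$, and with Assumption \ref{ass:compactSpaceOrLyapunov} supplying the tightness needed on a non-compact state space. Your identification of the rejection term as the central technical obstacle and of the Lyapunov function's role in the tightness/upper-bound step matches that account, so there is nothing to flag.
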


Assumption~\ref{ass:compactSpaceOrLyapunov} is always satisfied when the state space is bounded—for example, the function $U\equiv 0$ fulfills properties~\ref{ass:notNegativeInf}-\ref{ass:supU}. When the space $S$ is non-compact, e.g., the case $S=\mathbb{R}^d$ considered here, it is often challenging to show whether or not there exists a function $U$ satisfying all three properties of \ref{ass:compactSpaceOrLyapunov}. Note that, in line with \cite{DupuisEllis, budhiraja2019analysis}, we here refer to a function $U$ that satisfies \ref{ass:compactSpaceOrLyapunov} as a Lyapunov function. This differs slightly from the standard Markov chain literature where the term is often reserved for functions $V$ appearing in the drift condition \eqref{driftCondition}.

As outlined in Section \ref{sec:intro}, the aim of this work is to investigate whether or not an LDP holds for some specific choices of proposal distributions $J$. Using Theorem \ref{thm:LDP_SPA}, this amounts to considering \ref{ass:targetAbsContLambda}--\ref{ass:compactSpaceOrLyapunov}, for the different choices of $J$. Before moving to the three choices for proposal distribution--IMH, MALA and RWM--considered in this paper, in Section \ref{sec:propLyapRd} we first obtain an equivalent formulation of Property~\ref{ass:relCompact}, the most challenging part of \ref{ass:compactSpaceOrLyapunov}, more amenable to analysis in the MCMC setting.

\section{An equivalent asymptotic formulation of \texorpdfstring{\ref{ass:compactSpaceOrLyapunov}}{(A.3)}}
\label{sec:propLyapRd}

Establishing whether or not Assumption \ref{ass:compactSpaceOrLyapunov} holds is the most challenging step when using the results of \cite{milinanni2024large} to obtain an LDP for specific choices of proposal distribution $J$. Therefore, before considering specific examples, in this section we inspect the assumption in some more detail. In particular, we obtain an equivalent formulation of the most demanding part, the relative compactness appearing in \ref{ass:relCompact}, that is tailored to the MCMC setting.

Given a function $U:\mathbb{R}^d\to[0,\infty)$, we define $F_U:S\to\mathbb{R}$ as
\begin{equation}
\label{eq:expressionUdefinition}
    F_U(x)=U(x)-\log\int_Se^{U(y)}K(x,dy).
\end{equation} 
Using this definition, we can reformulate Property~\ref{ass:notNegativeInf} of Assumption \ref{ass:compactSpaceOrLyapunov} as $\inf_{x\in S}F_U(x)>-\infty$, and Property~\ref{ass:relCompact} now requires that the sub-level sets of $F_U$ are relatively compact.

To facilitate a comparison with the usual drift condition \eqref{driftCondition}, we reformulate the latter in terms of the function $U=\log V$, where $V$ is the (regular) Lyapunov function. For this $U$, condition \eqref{driftCondition} becomes
     \begin{equation}
     \label{eq:driftCondU}
        U(x)-\log\int_Se^{U(y)}q(x,dy)\ge -\log\left(\lambda + e^{-U(x)}bI\{x\in C\}\right).
    \end{equation}
Note that $F_U$ then corresponds to the left-hand-side of \eqref{eq:driftCondU} for the specific choice of kernel $q=K$. Thus, there is a direct link between the function $F_U$ appearing in the large deviation context and the Lyapunov function $V$ appearing in the drift condition \eqref{driftCondition} associated with the MH kernel $K$. Since geometric ergodicity is equivalent to the drift and minorization conditions (see Section \ref{sec:notationDef} and Chapter 15 in \cite{meyn2009markov}), there is a link between the type of Lyapunov function used in \cite{milinanni2024large} to prove an LDP for MH chains and those used in the Markov chain literature to establish geometric ergodicity. Along those lines, we have the following result concerning Property~\ref{ass:notNegativeInf} of Assumption \ref{ass:compactSpaceOrLyapunov} and the drift condition; geometric ergodicity and large deviations are further discussed in Section \ref{sec:conjecture}.
\begin{proposition}
\label{prop:PropA3a}
    Let $\{X_i\}_{i\ge0}$ be a geometrically ergodic Metropolis-Hasting Markov chain, and let $V:S\to[1,+\infty)$ be a Lyapunov function that satisfies the drift condition~\eqref{driftCondition}. Then Property~\ref{ass:notNegativeInf} in Assumption~\ref{ass:compactSpaceOrLyapunov} holds for the function $U=\log V$.
\end{proposition}
\begin{proof}
   The inequality \eqref{driftCondition} in the standard drift condition for Markov chains is equivalent to \eqref{eq:driftCondU}, i.e., for some $\lambda<1,b<\infty$ and a small set $C\subset S$,
    \begin{equation}
    \label{eq:driftCondFU}
        F_U(x)\ge -\log\left(\lambda + e^{-U(x)}bI\{x\in C\}\right),
    \end{equation}
    with $F_U$ as in \eqref{eq:expressionUdefinition}. 
    Note that $b$ in the drift condition \eqref{driftCondition} can be chosen positive. Therefore, assume $b>0$. For every $x\in S$, because $U(x)\ge0$, we have that $\lambda+be^{-U(x)}I\{x\in C\}\le \lambda+b<\infty$. Combining this with \eqref{eq:driftCondFU}, we obtain $\inf_{x\in S}F_U(x)\ge-\log\left(\lambda+b\right)>-\infty$, which completes the proof.
\end{proof}

As mentioned above, Property~\ref{ass:relCompact} is the more demanding part of \ref{ass:compactSpaceOrLyapunov}. In Lemma \ref{lem:rto0}, we provide necessary and sufficient conditions on the MH kernel $K$ and $U$ that are equivalent to relative compactness of $F_U$ but more amenable to analysis; the proof is straightforward. These conditions are used extensively in Sections \ref{sec:proofIMH}-\ref{sec:proofRWM} and in the \hyperref[appn]{Appendix}, where we consider kernels $K$ corresponding to IMH, MALA and RWM samplers.

\begin{lemma}
\label{lem:rto0}
    For a given function $U:S\to[0,\infty)$, Property~\ref{ass:relCompact} in Assumption~\ref{ass:compactSpaceOrLyapunov} holds if and only if the transition kernel $K$, defined in~\eqref{eq:decompositionK}, satisfies\begin{equation}
    \label{eq:intAto1}
        \lim_{|x|\to\infty}\int_Sa(x,y)dy=1,
    \end{equation}
    and $U$ satisfies 
    \begin{equation}
        \label{eq:intexpUato0}
        \lim_{|x|\to\infty}\int_Se^{U(y)-U(x)}a(x,y)dy=0.
    \end{equation}
    
\end{lemma}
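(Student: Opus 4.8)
The plan is to translate Property~\ref{ass:relCompact} into a coercivity statement for $F_U$, and then to compute $F_U$ explicitly using the decomposition of the MH kernel so that the two conditions \eqref{eq:intAto1} and \eqref{eq:intexpUato0} emerge directly from an elementary observation about sums of nonnegative quantities.

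First I would observe that since $S=\mathbb{R}^d$ enjoys the Heine--Borel property, a subset is relatively compact precisely when it is bounded. Consequently, Property~\ref{ass:relCompact}---that every sub-level set $\{x\in S : F_U(x)\le M\}$ is relatively compact---is equivalent to the coercivity condition
\begin{equation*}
    \lim_{|x|\to\infty}F_U(x)=+\infty.
\end{equation*}
The forward implication is immediate, and the reverse follows by contraposition: if $F_U$ failed to be coercive there would be a sequence $x_n$ with $|x_n|\to\infty$ and $\sup_n F_U(x_n)\le M_0<\infty$, producing an unbounded, hence non-relatively-compact, sub-level set. No continuity of $F_U$ is required for this step.

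Next I would insert the kernel decomposition \eqref{eq:decompositionK} into the definition \eqref{eq:expressionUdefinition} of $F_U$. Writing $\int_S e^{U(y)}K(x,dy)=\int_S e^{U(y)}a(x,y)\,dy+r(x)e^{U(x)}$ and factoring out $e^{U(x)}$ yields the key identity
\begin{equation*}
    F_U(x)=-\log\!\left[\int_S e^{U(y)-U(x)}a(x,y)\,dy+r(x)\right].
\end{equation*}
Since $U\ge 0$, the bracketed quantity equals $e^{-U(x)}\int_S e^{U(y)}K(x,dy)\ge e^{-U(x)}>0$, so the logarithm is always well defined, and coercivity of $F_U$ is equivalent to the bracket tending to $0$ as $|x|\to\infty$.

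Finally, I would exploit that the bracket is a sum of two nonnegative terms, namely $\int_S e^{U(y)-U(x)}a(x,y)\,dy\ge 0$ and $r(x)=1-\int_S a(x,y)\,dy\ge 0$. A sum of nonnegative quantities tends to $0$ if and only if each summand does, so the bracket vanishes in the limit precisely when $\int_S e^{U(y)-U(x)}a(x,y)\,dy\to 0$ and $r(x)\to 0$; the latter is exactly \eqref{eq:intAto1} via \eqref{eq:def_r}, and the former is exactly \eqref{eq:intexpUato0}. Chaining the equivalences from the previous steps then yields the claim. The computation is otherwise routine; the only points requiring genuine care are the coercivity reformulation of relative compactness (which relies on $S=\mathbb{R}^d$) and the strict positivity of the argument of the logarithm, so I do not anticipate a substantial obstacle.
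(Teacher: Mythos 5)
Your proof is correct and takes essentially the same route as the paper's own argument: reformulate Property~\ref{ass:relCompact} as coercivity of $F_U$ (valid since $S=\mathbb{R}^d$), rewrite $F_U$ via the decomposition \eqref{eq:decompositionK} as $-\log\left[\int_S e^{U(y)-U(x)}a(x,y)\,dy + r(x)\right]$, and conclude by noting that a sum of two nonnegative terms vanishes in the limit if and only if each term does. Your added remarks on the strict positivity of the argument of the logarithm and the contrapositive step in the coercivity equivalence are sound refinements of details the paper leaves implicit.
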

    It is worth emphasising that the function $a$, and therefore also the MH kernel $K$, depends on the specific choice of target $\pi$ and proposal density $J$, and that \eqref{eq:intAto1} is a property solely of $K$. That is, this property does not involve any choice of (potential) Lyapunov function $U$. Therefore, as we will see in the coming sections, for a specific choice of MH dynamics, it is possible to have \eqref{eq:intAto1} satisfied but there being no function $U$ that satisfies \eqref{eq:intexpUato0}. We will also see examples where \eqref{eq:intAto1} is not satisfied and thus there is no reason to look for a suitable Lyapunov function $U$.

\begin{proof}[Proof of Lemma \ref{lem:rto0}]
    From \eqref{eq:expressionUdefinition}, Property~\ref{ass:relCompact} in Assumption~\ref{ass:compactSpaceOrLyapunov} is equivalent to $F_U$ having relatively compact sub-level sets. Because we here consider $S=\mathbb{R}^d$, this in turn holds if and only if for all $M\in \mathbb{R}$, there exists an $R>0$ such that for all $x$ in the sub-level set $\{x\in\mathbb{R}^d:F_U(x)\le M\}$, we have $|x|\le R$. This is equivalent to the following statement: for all $M\in\mathbb{R}$ there exists an $R>0$ such that for all $x$ with norm $|x|>R$, $F_U(x)>M$ holds, i.e.,
    \begin{equation}
    \label{eq:limFU}        \lim_{|x|\to\infty}F_U(x)=+\infty.
    \end{equation}
    
    Using the decomposition \eqref{eq:decompositionK} of the transition kernel $K(x,dy)$, $F_U$ can be rewritten as 
\begin{align}
\label{eq:expressionU}
\begin{split}
    F_U(x)&=-\log\int_Se^{U(y)-U(x)}K(x,dy)=-\log\left(\int_Se^{U(y)-U(x)}a(x,y)dy+r(x)\right).
\end{split}
\end{align}
    From this we obtain that \eqref{eq:limFU} is equivalent to
    \begin{equation}
    \label{eq:limLemma1}
         \lim_{|x|\to\infty}\left(\int_Se^{U(y)-U(x)}a(x,y)dy+r(x)\right)=0.
    \end{equation}
    By definition of the functions $a$ and $r$, we have $a(x,y)\ge0$ and $r(x)\ge0$ for all $x,y\in S$. Therefore, \eqref{eq:limLemma1} is equivalent to satisfying both \eqref{eq:intexpUato0} and $\lim_{|x|\to\infty}r(x)=0$. From the definition~\eqref{eq:def_r} of $r(x)$, the latter limit can be reformulated as \eqref{eq:intAto1}. We conclude that \eqref{eq:intAto1} and \eqref{eq:intexpUato0} are necessary and sufficient conditions for the relative compactness of the sub-level sets of $F_U(x)$, and therefore for Property~\ref{ass:relCompact} in Assumption~\ref{ass:compactSpaceOrLyapunov}.
\end{proof}

In the context of the MH algorithm, the measurable function $r(x)$ represents the probability of rejecting a proposed state when the current state of the chain is $x$, while $1-r(x)$ is the probability of accepting the proposal. Thus, Lemma~\ref{lem:rto0} indicates that $1-r(x)$, the probability of accepting a state proposed from state $x$, converges to $1$ as $|x|\to\infty$.

Equipped with Lemma \ref{lem:rto0}, we are now ready to consider Assumption \ref{ass:compactSpaceOrLyapunov} for the three classes of samplers mentioned in Section \ref{sec:intro}: Independent Metropolis-Hastings (Section \ref{sec:IMH}), the Metropolis-adjusted Langevin algorithm (Section \ref{sec:MALA}), and Random Walk Metropolis (Section \ref{sec:RWM}).

\section{Main results}
\label{sec:main}
In this section we present the main results of the paper, which cover three of the most common MH algorithms. The first choice of MH dynamics we consider is the Independent Metropolis-Hastings (IMH) algorithm \cite{Tie94, robert2004monte}, treated in Section \ref{sec:IMH}; see also Section \ref{sec:intro} for more general references about IMH and its theoretical properties. Next, we consider the Metropolis-adjusted Langevin algorithm (MALA) \cite{besag1994comments, roberts1996exponential} in Section \ref{sec:MALA}. Section \ref{sec:RWM} contains the final example of an explicit MH algorithm considered in this paper: the Random Walk Metropolis (RWM) algorithm; see, e.g., \cite{robert2004monte, mengersen1996rates, roberts2003linking} and references therein. Whereas for IMH and MALA we prove that the LDP of Section \ref{sec:LDP} holds for certain parameters, for the RWM we show that the results of \cite{milinanni2024large} cannot be employed, regardless of the choice of target distribution. Note that all proofs are postponed to Section \ref{sec:proofs} and the \hyperref[appn]{Appendix}.

\subsection{Independent Metropolis-Hastings algorithm}
\label{sec:IMH}
 In the IMH algorithm, moves from a state $x\in S$ are proposed with a proposal distribution $J(\cdot|x)$ that is independent of $x$, i.e. $J(dy|x)=\hat J(dy)\in\mathcal{P}(S)$. Let $\hat J(y)$ be the probability density of $\hat J(dy)$. With this choice of proposal distribution, the density $a(x,y)$ of the acceptance part in the MH kernel $K(x,dy)$ simplifies to
\begin{equation*}
    a(x,y)=\min\left\{1,\frac{\pi(y)\hat J(x)}{\pi(x)\hat J(y)}\right\}\hat J(y).
\end{equation*}

We consider target and proposal distributions with densities of the form
\begin{equation*}
    \pi(x)\propto e^{-\eta|x|^\alpha}\quad\text{and}\quad \hat J(y)\propto e^{-\gamma|y|^\beta},
\end{equation*}
respectively, with $\eta,\gamma,\alpha,\beta>0$.
In Theorem \ref{thm:LDP_indep} we prove an LDP for the IMH sampler for certain values of these hyperparameters.

We consider a target $\pi$ and proposal $\hat J$ on these forms in order to facilitate comparison with the standard results of \cite{mengersen1996rates}. This comparison is carried out in Section~\ref{sec:conjecture}.

\begin{theorem}
    \label{thm:LDP_indep}
    Consider the target density $\pi(x)\propto e^{-\eta|x|^\alpha}$ and the independent proposal density $\hat J(y)\propto e^{-\gamma|y|^\beta}$ in the Independent Metropolis-Hastings algorithm. Suppose that either of the following holds:
    \begin{enumerate}
        \item[i)] $\alpha=\beta$ and $\eta\ge\gamma$,
        \item[ii)] $\alpha > \beta$.
    \end{enumerate}
    Then, the empirical measures of the associated Metropolis-Hastings chain satisfies an LDP with speed $n$ and rate function $I$ given by  \eqref{eq:rateFunc}.
\end{theorem}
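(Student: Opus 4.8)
The plan is to obtain the statement as a direct consequence of the general result, Theorem~\ref{thm:LDP_SPA}: it suffices to verify that, for the IMH kernel $K$ induced by $\pi(x)\propto e^{-\eta|x|^\alpha}$ and $f(y)\propto e^{-\gamma|y|^\beta}$, Assumptions~\ref{ass:targetAbsContLambda}--\ref{ass:compactSpaceOrLyapunov} all hold. Assumptions~\ref{ass:targetAbsContLambda} and~\ref{ass:proposalDistributionAbsCont} are immediate: $S=\mathbb{R}^d$ is open, $\pi$ and $f$ are continuous and strictly positive, so $\pi$ is equivalent to $\lambda$ and $J(\cdot\mid x)=f\ll\pi$; moreover $J(y\mid x)=f(y)$ is bounded and, being independent of $x$, jointly continuous. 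Thus the whole burden falls on Assumption~\ref{ass:compactSpaceOrLyapunov}, which is exactly the content isolated in Proposition~\ref{prop:LyapunovForIMH}; once that proposition supplies a Lyapunov function $U$ in the regimes (i) and (ii), the theorem follows verbatim from Theorem~\ref{thm:LDP_SPA}.

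For Assumption~\ref{ass:compactSpaceOrLyapunov} I would work through Lemma~\ref{lem:rto0}, reducing Property~\ref{ass:relCompact} to the two limits \eqref{eq:intAto1} and \eqref{eq:intexpUato0}. The key simplification is to write everything through the ratio $w(x):=\pi(x)/f(x)\propto e^{-\eta|x|^\alpha+\gamma|x|^\beta}$, so that $a(x,y)=\min\{1,w(y)/w(x)\}f(y)$ and, splitting on $A(x)=\{w(y)\ge w(x)\}$ and its complement $B(x)$ and using $w(y)f(y)=\pi(y)$,
\[
\int_S e^{U(y)-U(x)}a(x,y)\,dy=e^{-U(x)}\!\int_{A(x)}\! e^{U(y)}f(y)\,dy+\frac{e^{-U(x)}}{w(x)}\int_{B(x)}\! e^{U(y)}\pi(y)\,dy.
\]
Conditions (i) and (ii) are precisely those under which $\pi$ has lighter tails than $f$, i.e. $w(x)\to0$ as $|x|\to\infty$ (the boundary case of equal tails, where $w$ is constant, only simplifies matters). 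Granting this, \eqref{eq:intAto1} is the easy half: $\min\{1,w(y)/w(x)\}f(y)\to f(y)$ pointwise and is dominated by $f\in L^1(\lambda)$, so dominated convergence gives $\int_S a(x,y)\,dy\to\int_S f\,dy=1$.

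The heart of the argument is \eqref{eq:intexpUato0}, and here I would take the explicit candidate $U(x)=\delta|x|^\beta$ with $0<\delta<\gamma$, matching the proposal's tail. For the first term in the display, $\int_S e^{U(y)}f(y)\,dy\propto\int_S e^{-(\gamma-\delta)|y|^\beta}\,dy<\infty$, so that term is at most a constant times $e^{-\delta|x|^\beta}\to0$; this same finiteness, together with $U\ge0$ and $a(x,y)\le f(y)$, bounds the argument of the logarithm in $F_U$ uniformly from above and hence yields Property~\ref{ass:notNegativeInf}, while Property~\ref{ass:supU} is automatic for $U(x)=\delta|x|^\beta$. For the second term, for large $|x|$ one has $B(x)=\{|y|>|x|\}$ (since $\rho\mapsto-\eta\rho^\alpha+\gamma\rho^\beta$ is eventually decreasing), and a Laplace/tail estimate gives $\int_{\{|y|>|x|\}}e^{\delta|y|^\beta-\eta|y|^\alpha}\,dy\lesssim|x|^{d-\alpha}e^{\delta|x|^\beta-\eta|x|^\alpha}$; combined with $e^{-U(x)}/w(x)\propto e^{\eta|x|^\alpha-(\gamma+\delta)|x|^\beta}$, the second term is $\lesssim|x|^{d-\alpha}e^{-\gamma|x|^\beta}\to0$. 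Hence \eqref{eq:intexpUato0} holds, and by Lemma~\ref{lem:rto0} so does Property~\ref{ass:relCompact}.

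The main obstacle is twofold. Analytically, it is the tail estimate controlling the second term: one must balance the growth $e^{\eta|x|^\alpha}$ coming from $1/w(x)$ against the decay of the truncated tail of $\pi$, and check that the surviving $e^{-\gamma|x|^\beta}$ factor wins for every admissible $(\alpha,\beta,\eta,\gamma)$ and every $0<\delta<\gamma$. Conceptually, the subtler point—which is really the substance of Proposition~\ref{prop:LyapunovForIMH}—is the sharp delineation of the parameter regimes: one has to show that \emph{no} admissible $U$ exists when $\pi$ has strictly heavier tails than $f$ (there $w(x)\to\infty$ and \eqref{eq:intAto1} already fails), so that (i)--(ii) are not merely sufficient but exactly the light-tailed regime. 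Once Proposition~\ref{prop:LyapunovForIMH} is in place, Assumptions~\ref{ass:targetAbsContLambda}--\ref{ass:compactSpaceOrLyapunov} all hold and the theorem is immediate from Theorem~\ref{thm:LDP_SPA}.
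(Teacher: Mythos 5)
Your proposal is correct and follows essentially the same route as the paper: reduce Theorem~\ref{thm:LDP_indep} to Theorem~\ref{thm:LDP_SPA} plus Proposition~\ref{prop:LyapunovForIMH}, verify Property~\ref{ass:relCompact} of Assumption~\ref{ass:compactSpaceOrLyapunov} via Lemma~\ref{lem:rto0}, and exhibit an explicit Lyapunov function $U(x)=\delta|x|^\beta$ with $0<\delta<\gamma$ (the paper takes $\delta=\gamma/2$). The one place you do more work than necessary is \eqref{eq:intexpUato0}: the splitting into $A(x)$ and $B(x)$ and the Laplace tail estimate for the $B(x)$-term can be bypassed, since on $B(x)$ one has $w(y)/w(x)\le 1$, hence $\pi(y)/w(x)=\bigl(w(y)/w(x)\bigr)f(y)\le f(y)$ there as well; the paper simply uses the global bound $a(x,y)\le f(y)$, which gives $\int_S e^{U(y)-U(x)}a(x,y)\,dy\le e^{-\delta|x|^\beta}\int_S e^{\delta|y|^\beta}f(y)\,dy\to 0$ in one line, and the same bound also yields Property~\ref{ass:notNegativeInf}, exactly as you argue. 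Finally, note that for the theorem itself only sufficiency is needed, so the nonexistence of $U$ in the heavy-tailed regime, which you flag as a conceptual obstacle, belongs to the ``only if'' part of Proposition~\ref{prop:LyapunovForIMH} and is not required here.
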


The LDP in Theorem \ref{thm:LDP_indep} follows directly from the combination of Theorem \ref{thm:LDP_SPA}, the general large deviation result for empirical measures of MH chains, and the following result on the existence of a suitable Lyapunov function, the proof of which is carried out in section~\ref{sec:proofIMH}.

\begin{proposition}
    \label{prop:LyapunovForIMH}
    Consider the target density $\pi(x)\propto e^{-\eta|x|^\alpha}$ and the independent proposal density $\hat J(y)\propto e^{-\gamma|y|^\beta}$ in the Independent Metropolis-Hastings algorithm. Assumption~\ref{ass:compactSpaceOrLyapunov} is satisfied
    if and only if either of the following holds:
    \begin{enumerate}
        \item[i)] $\alpha=\beta$ and $\eta\ge\gamma$,
        \item[ii)] $\alpha > \beta$.
    \end{enumerate}
\end{proposition}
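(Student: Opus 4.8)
The plan is to reduce Assumption~\ref{ass:compactSpaceOrLyapunov} to the single kernel-level condition \eqref{eq:intAto1} by means of Lemma~\ref{lem:rto0}, and then to determine for which parameter regimes \eqref{eq:intAto1} holds. The crucial simplification is that, for IMH, the acceptance density satisfies $a(x,y)\le f(y)$ pointwise (because $\varpi\le 1$). Hence for any $U:S\to[0,\infty)$ with $U(x)\to\infty$ as $|x|\to\infty$ and $\int_S e^{U(y)}f(y)\,dy<\infty$, one has $\int_S e^{U(y)-U(x)}a(x,y)\,dy\le e^{-U(x)}\int_S e^{U(y)}f(y)\,dy\to 0$, so \eqref{eq:intexpUato0} holds automatically, \emph{irrespective} of the comparison between $\pi$ and $f$. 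The choice $U(x)=c|x|^\beta$ with $0<c<\gamma$ meets these requirements; Property~\ref{ass:supU} is immediate from continuity, and Property~\ref{ass:notNegativeInf} follows since the same bound (together with $U\ge 0$, hence $e^{-U(x)}\le 1$, and $r(x)\le 1$) shows $\int_S e^{U(y)-U(x)}a(x,y)\,dy+r(x)$ is bounded above on $S$, so $F_U$ is bounded below. Consequently, once \eqref{eq:intAto1} is known, all of Assumption~\ref{ass:compactSpaceOrLyapunov} follows; conversely, if \eqref{eq:intAto1} fails then Lemma~\ref{lem:rto0} forces Property~\ref{ass:relCompact} to fail for \emph{every} admissible $U$, so the assumption cannot hold. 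This collapses the proposition to the equivalence between the parameter conditions and \eqref{eq:intAto1}.

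Next I would analyse \eqref{eq:intAto1}, i.e. $r(x)\to 0$, directly. Writing $\psi(y)=\gamma|y|^\beta-\eta|y|^\alpha$, the acceptance region is $A_x=\{y\in S:\pi(y)f(x)\ge\pi(x)f(y)\}=\{y:\psi(y)\ge\psi(x)\}$, on which $\varpi=1$. Splitting the integral over $A_x$ and $A_x^c$ gives $\int_S a(x,y)\,dy=\int_{A_x}f(y)\,dy+\frac{f(x)}{\pi(x)}\int_{A_x^c}\pi(y)\,dy$. The shape of $A_x$ for large $|x|$ is governed by the eventual radial monotonicity of $\psi$: when $\alpha>\beta$ one has $\psi(y)\to-\infty$, so $A_x$ swells to a set comparable to $\{|y|\lesssim|x|\}$ and $\int_{A_x}f\to 1$; when $\alpha<\beta$ one has $\psi(y)\to+\infty$, so $A_x$ is pushed out to $\{|y|\gtrsim|x|\}$ and $\int_{A_x}f\to 0$; when $\alpha=\beta$ one has $\psi(y)=(\gamma-\eta)|y|^\alpha$, so $A_x=\{|y|\le|x|\}$ if $\eta>\gamma$, $A_x=S$ if $\eta=\gamma$, and $A_x=\{|y|\ge|x|\}$ if $\eta<\gamma$.

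I would then combine this with the prefactor $\frac{f(x)}{\pi(x)}\propto e^{\eta|x|^\alpha-\gamma|x|^\beta}$. In the regimes $\alpha>\beta$ and ($\alpha=\beta$, $\eta>\gamma$) the first term tends to $1$, while the second—despite a possibly divergent prefactor—is killed by the target tail integral $\int_{A_x^c}\pi$, the competing exponents combining into a decaying factor; hence $\int_S a(x,y)\,dy\to 1$ and \eqref{eq:intAto1} holds. The degenerate case $\alpha=\beta$, $\eta=\gamma$ gives $f=\pi$, so $\varpi\equiv 1$, $r\equiv 0$, and \eqref{eq:intAto1} holds trivially. In the complementary regimes $\alpha<\beta$ and ($\alpha=\beta$, $\eta<\gamma$) both terms vanish—the first because $A_x$ escapes to the tail, the second because $\frac{f(x)}{\pi(x)}\to 0$ there—so $\int_S a(x,y)\,dy\to 0$, i.e. $r(x)\to 1$ and \eqref{eq:intAto1} fails. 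This establishes that Assumption~\ref{ass:compactSpaceOrLyapunov} holds precisely in the light-target-tail regimes of the proposition, in agreement with the uniform ergodicity characterisation of Mengersen and Tweedie.

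The hard part will be the tail estimates of the third step: making the heuristic "$A_x$ is essentially $\{|y|\le|x|\}$ (or $\{|y|\ge|x|\}$)" rigorous in $\mathbb{R}^d$ requires the eventual radial monotonicity of $\psi$ together with quantitative control of $\int_{A_x^c}\pi$ and $\int_{A_x}f$ via standard tail bounds for the stretched-exponential densities, and one must carefully track the competing exponentials $e^{\eta|x|^\alpha}$ and $e^{-\gamma|x|^\beta}$ in the second term to confirm that they combine into a decaying quantity exactly in the light-target regimes. By contrast, the $U$-construction and the verification of Properties~\ref{ass:notNegativeInf} and \ref{ass:supU} are routine once the pointwise bound $a(x,y)\le f(y)$ is exploited.
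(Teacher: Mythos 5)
Your proposal is correct, and its skeleton coincides with the paper's: reduce everything to the two conditions of Lemma~\ref{lem:rto0}, observe that \eqref{eq:intAto1} is a property of the kernel alone (so its failure kills Property~\ref{ass:relCompact} for every $U$), and exploit the pointwise bound $a(x,y)\le f(y)$ to build an explicit Lyapunov function $U(x)=c|x|^\beta$ with $c<\gamma$ (the paper takes $c=\gamma/2$) verifying \eqref{eq:intexpUato0}, Property~\ref{ass:notNegativeInf}, and Property~\ref{ass:supU} exactly as you do. Where you genuinely diverge is the analysis of \eqref{eq:intAto1}: the paper computes the pointwise limit of the Hastings ratio $\pi(y)f(x)/(\pi(x)f(y))$ as $|x|\to\infty$ (it is $0$, $1$, or $+\infty$ depending on the sign of $\alpha-\beta$ and $\eta-\gamma$) and then invokes dominated convergence, since $\min\{1,\cdot\}f(y)\le f(y)$; this settles the ``if and only if'' in a few lines with no geometry. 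You instead split $\int_S a(x,y)\,dy=\int_{A_x}f+\frac{f(x)}{\pi(x)}\int_{A_x^c}\pi$ over the acceptance region $A_x=\{\psi(y)\ge\psi(x)\}$ and track the shape of $A_x$ via eventual radial monotonicity of $\psi$. This route also works (and correctly handles the degenerate case $\alpha=\beta$, $\eta=\gamma$, where $r\equiv 0$), but it is heavier than necessary, and the step you flag as the ``hard part''---controlling the prefactor $f(x)/\pi(x)$ against $\int_{A_x^c}\pi$ in the light-target regimes---is actually superfluous: since $\int_S a(x,y)\,dy=1-r(x)\le 1$ always, the single estimate $\int_{A_x}f(y)\,dy\to 1$ (e.g.\ from $A_x\supseteq\{|y|\le|x|\}$ for large $|x|$) already forces \eqref{eq:intAto1}, and in the complementary regimes both terms vanish by the trivial bounds $\int_{A_x}f\le\int_{\{|y|\ge c_x\}}f$ and $\int_{A_x^c}\pi\le 1$. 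So your argument is sound, but the paper's dominated-convergence shortcut buys the same conclusion with essentially no tail estimates.
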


\subsection{Metropolis-adjusted Langevin algorithm}
\label{sec:MALA}
MALA is characterised by the following proposal density,
\begin{equation*}
    J(y|x) = C\exp\left\{-\frac{1}{2\varepsilon} \left|y-x-\frac{\varepsilon}{2}\nabla \log\pi(x)\right|^2\right\}.
\end{equation*}
This proposal is obtained by a discretisation with step size $\varepsilon>0$ of the 
continuous-time Langevin process $X = \{ X_t \} _{t \geq 0}$ in  $\mathbb{R}^d$, defined by
\begin{equation*}
    dX_t=\frac{1}{2}\nabla \log\pi(X_t)dt+dB_t,
\end{equation*}
where $B_t$ denotes the standard $d$-dimensional Brownian motion.

The aim of the section is to provide necessary and sufficient conditions for the existence of a Lyapunov function $U(x)$ satisfying Assumption~\ref{ass:compactSpaceOrLyapunov} when the target density is
\begin{equation}
\label{eq:MALAtarget}
    \pi(x)\propto e^{-\gamma|x|^\beta},
\end{equation}
with $\beta>0$. In this case the corresponding MALA proposal density is given by
\begin{equation}
\label{eq:MALAproposal}
            J(y|x)=C\exp\left\{-\frac{1}{2\varepsilon}\left| y-x+\frac{\varepsilon\gamma\beta}{2}|x|^{\beta-2}x\right|^2\right\}.
\end{equation}
Such necessary and sufficient conditions are obtained in Proposition \ref{prop:MALA}. 
Similar to the results for IMH in Section \ref{sec:IMH}, combined with the large deviation result in \cite{milinanni2024large}, Proposition \ref{prop:MALA} yields the following LDP for the empirical measure of the MH chain with MALA proposal.
\begin{theorem}
\label{thm:LDP_MALA}
    Consider a target density $\pi(x)\propto e^{-\gamma|x|^\beta}$ and let $J(y|x)$ be the corresponding MALA proposal density with discretization step $\varepsilon$,
    \begin{equation*}
        J(y|x) \propto \exp \left\{ -\frac{1}{2\varepsilon} \left|y - x + \frac{\varepsilon \gamma \beta}{2} |x| ^{\beta - 2} x\right| ^2 \right\}.
    \end{equation*}
    Suppose that either of the following holds:
    \begin{itemize}
    \item[i)] $\beta=2$ and $\varepsilon\gamma<2$, \item[ii)] $1<\beta<2$. 
    \end{itemize}
    Then, the empirical measures of the associated Metropolis-Hastings chain satisfy an LDP with speed $n$ and rate function $I$ given by \eqref{eq:rateFunc}.
\end{theorem}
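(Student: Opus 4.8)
The plan is to derive \thmref{thm:LDP_MALA} as a direct corollary of the general large deviation result (Theorem \ref{thm:LDP_SPA}) together with the forthcoming Proposition \ref{prop:MALA}. The key observation is that Theorem \ref{thm:LDP_SPA} already provides the LDP with speed $n$ and rate function \eqref{eq:rateFunc} for any MH chain satisfying Assumptions \ref{ass:targetAbsContLambda}--\ref{ass:compactSpaceOrLyapunov}; therefore it suffices to verify that, under either hypothesis i) or ii), all three of these assumptions hold for the target $\pi(x)\propto e^{-\gamma|x|^\beta}$ and the MALA proposal \eqref{eq:MALAproposal}.

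First I would check the two ``structural'' assumptions, \ref{ass:targetAbsContLambda} and \ref{ass:proposalDistributionAbsCont}, which do not depend on the choice of Lyapunov function. For \ref{ass:targetAbsContLambda}: with $S=\mathbb{R}^d$ open, the density $e^{-\gamma|x|^\beta}$ is continuous and strictly positive, so $\pi$ is equivalent to $\lambda$ and the density is continuous. For \ref{ass:proposalDistributionAbsCont}: the Gaussian-type density \eqref{eq:MALAproposal} is continuous, bounded, and strictly positive for all $(x,y)\in(\mathbb{R}^d)^2$, and hence $J(\cdot\mid x)\ll\pi$; a small caveat is that the drift term $|x|^{\beta-2}x$ must be well-defined and continuous at the origin, which holds precisely because we assume $\beta>1$, so $|x|^{\beta-1}\to 0$ as $x\to 0$. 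These verifications are routine given the explicit forms.

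The substantive input is Assumption \ref{ass:compactSpaceOrLyapunov}, the existence of a suitable Lyapunov function $U$, and this is exactly what Proposition \ref{prop:MALA} is designed to establish: it gives necessary and sufficient conditions on $(\beta,\varepsilon,\gamma)$ for \ref{ass:compactSpaceOrLyapunov} to hold, and these conditions are precisely i) $\beta=2$ with $\varepsilon\gamma<2$ and ii) $1<\beta<2$. Thus, once Proposition \ref{prop:MALA} is invoked, Assumption \ref{ass:compactSpaceOrLyapunov} holds under either hypothesis of the theorem. Combining this with the structural assumptions verified above, all of \ref{ass:targetAbsContLambda}--\ref{ass:compactSpaceOrLyapunov} are satisfied, and applying Theorem \ref{thm:LDP_SPA} immediately yields the claimed LDP with speed $n$ and rate function \eqref{eq:rateFunc}.

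The main obstacle is therefore not in the proof of the theorem itself---which is a short deduction---but is entirely deferred to Proposition \ref{prop:MALA}. I expect the real work to lie there, specifically in analysing the two limits \eqref{eq:intAto1} and \eqref{eq:intexpUato0} of Lemma \ref{lem:rto0} for the MALA acceptance density $a(x,y)=\varpi(x,y)J(y\mid x)$. Unlike the IMH case, the MALA proposal depends on $x$ through both the mean shift $x-\tfrac{\varepsilon\gamma\beta}{2}|x|^{\beta-2}x$ and the Hastings ratio, so establishing \eqref{eq:intAto1} requires a careful asymptotic analysis of the acceptance probability $1-r(x)$ as $|x|\to\infty$. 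This is where the regime distinctions ($\beta=2$ versus $1<\beta<2$, and the threshold $\varepsilon\gamma<2$) originate, and is the technically demanding part of the argument; the LDP in Theorem \ref{thm:LDP_MALA} is then a clean corollary.
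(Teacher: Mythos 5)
Your proposal is correct and follows exactly the paper's own route: Theorem \ref{thm:LDP_MALA} is deduced as an immediate corollary of Theorem \ref{thm:LDP_SPA} combined with Proposition \ref{prop:MALA}, with all the substantive work residing in the latter. Your explicit verification of Assumptions \ref{ass:targetAbsContLambda} and \ref{ass:proposalDistributionAbsCont} (including the continuity of the drift term at the origin for $\beta>1$) is a small but sound addition that the paper leaves implicit.
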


The proof of Theorem \ref{thm:LDP_MALA} is an immediate consequence of combining Theorem \ref{thm:LDP_SPA} with the following result, the proof of which is outlined in Section \ref{sec:proofMALA} and full details provided in the \hyperref[appn]{Appendix}.
\begin{proposition}
\label{prop:MALA}
    Let the target density be $\pi(x)\propto e^{-\gamma|x|^\beta}$ and let $J(y|x)$ be the corresponding MALA proposal density \eqref{eq:MALAproposal} with discretisation step $\varepsilon$. Assumption~\ref{ass:compactSpaceOrLyapunov} is satisfied if and only if either of the following holds:
    \begin{enumerate}
        \item[i)] $\beta=2$ and $\varepsilon\gamma<2$,
        \item[ii)] $1<\beta<2$. 
    \end{enumerate}
\end{proposition}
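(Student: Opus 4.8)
The plan is to follow the same two-step template as the IMH analysis in Section~\ref{sec:IMH}: use Lemma~\ref{lem:rto0} to split the hard Property~\ref{ass:relCompact} into the kernel-only acceptance condition~\eqref{eq:intAto1} and the Lyapunov condition~\eqref{eq:intexpUato0}, and then dispatch Properties~\ref{ass:notNegativeInf} and~\ref{ass:supU} by the same elementary bounds used for IMH (continuity of the candidate $U$ gives~\ref{ass:supU}, and the bound $a(x,y)\le J(y|x)$ together with $r(x)\le 1$ gives~\ref{ass:notNegativeInf} once~\ref{ass:relCompact} is understood). Throughout I would work in the representation $Y = x + b(x) + \sqrt{\varepsilon}Z$ with $Z$ a standard $d$-dimensional Gaussian and $b(x) = -\tfrac{\varepsilon\gamma\beta}{2}|x|^{\beta-2}x$ the MALA drift, and I would take the candidate Lyapunov function $U(x) = c|x|^\beta$, as in the IMH case, with the constant $c>0$ to be fixed.

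First I would settle the acceptance condition~\eqref{eq:intAto1}, which is independent of $U$. Writing $L(x,y)$ for the log-Hastings ratio and using the explicit MALA proposal~\eqref{eq:MALAproposal}, I would Taylor-expand $L(x, x+b(x)+\sqrt{\varepsilon}Z)$ as $|x|\to\infty$ along a fixed direction (rotational symmetry lets me take $x = |x|e_1$). The point of the Metropolis adjustment is that the leading contributions cancel: the $O(|x|^{2\beta-2})$ terms coming from $\gamma(|x|^\beta-|y|^\beta)$ and from the proposal mismatch $-\tfrac1\varepsilon\langle y-x,\,b(x)+b(y)\rangle$ annihilate, as do the $O(|x|^{\beta-1})$ terms linear in $Z$. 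For $\beta<2$ the surviving term is $O(|x|^{\beta-2})\to 0$, so $L\to 0$ pointwise in $Z$ and bounded convergence yields $\int a(x,y)\,dy\to 1$; for $\beta>2$ the drift dominates, $|b(x)|/|x|\to\infty$, the proposal overshoots to $|y|\gg|x|$, and both the target and proposal ratios collapse, giving $\int a(x,y)\,dy\to 0$; and for $\beta=2$ an exact (quadratic) computation gives $L\sim \tfrac{\varepsilon^2\gamma^3(2-\varepsilon\gamma)}{2}|x|^2$, so the acceptance probability tends to $1$ precisely when $\varepsilon\gamma<2$ (and to $1/2$ when $\varepsilon\gamma=2$). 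This identifies~\eqref{eq:intAto1} as holding exactly on $\{\beta<2\}\cup\{\beta=2,\ \varepsilon\gamma<2\}$.

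Next I would verify~\eqref{eq:intexpUato0} for $U(x)=c|x|^\beta$ in the admissible regime. Bounding $a\le J$ reduces the integral to the Gaussian moment $e^{-c|x|^\beta}\,\mathbb{E}[e^{c|Y|^\beta}]$ with $Y\sim N(x+b(x),\varepsilon I)$. Two competing effects appear in the exponent $\log\big(e^{-c|x|^\beta}\mathbb{E}[e^{c|Y|^\beta}]\big)$: the inward drift contracts the mean, contributing $c\big(|x+b(x)|^\beta-|x|^\beta\big)\approx -\tfrac{c\varepsilon\gamma\beta^2}{2}|x|^{2\beta-2}$, while the Gaussian fluctuations inflate the moment generating function by $+\tfrac{c^2\varepsilon\beta^2}{2}|x|^{2\beta-2}$. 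The net exponent is therefore $\tfrac{\varepsilon\beta^2}{2}\,c(c-\gamma)\,|x|^{2\beta-2}$, which tends to $-\infty$ as soon as $0<c<\gamma$ and $2\beta-2>0$, i.e.\ $\beta>1$; this is exactly where the lower bound $\beta>1$ enters. For $\beta=2$ the Gaussian integral is explicit and forces $c<\tfrac{\gamma(2-\varepsilon\gamma)}{2}$ (together with $c<\tfrac1{2\varepsilon}$ for convergence), which admits a positive $c$ iff $\varepsilon\gamma<2$. Combining this with the acceptance analysis establishes the ``if'' direction, and, together with~\eqref{eq:intAto1} failing for $\beta>2$ and for $\beta=2,\ \varepsilon\gamma\ge 2$, three of the four boundaries of the claimed region.

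The hard part will be the remaining ``only if'' case $\beta\le 1$. Here the kernel analysis above shows that~\eqref{eq:intAto1} \emph{still holds} (the acceptance probability tends to $1$), so failure cannot be read off from the kernel alone: one must show that \emph{no} nonnegative $U$ satisfies~\eqref{eq:intexpUato0}. The heuristic is that the drift is the only mechanism able to systematically decrease $U$ in the tails, and balancing the drift gain $\langle b(x),\nabla U(x)\rangle$ against the unavoidable diffusion cost $\tfrac{\varepsilon}{2}|\nabla U(x)|^2$ shows that the best achievable exponent in $\int e^{U(y)-U(x)}a(x,y)\,dy$ is of order $-|b(x)|^2/(2\varepsilon)$; since $|b(x)| = \tfrac{\varepsilon\gamma\beta}{2}|x|^{\beta-1}$ stays bounded for $\beta\le 1$, this quantity cannot diverge to $-\infty$. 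Making this rigorous for an arbitrary (not necessarily smooth or radial) $U\ge 0$ is the crux: I would use that $a(x,y)\ge \tfrac12 J(y|x)$ on the bulk of the proposal for large $|x|$ (a consequence of acceptance tending to $1$), that the current state $x$ itself lies in that bulk once $|b(x)|$ is bounded, and a Gaussian lower bound / Jensen argument on $\int e^{U(y)-U(x)}J(y|x)\,dy$ to produce a strictly positive lower bound, contradicting~\eqref{eq:intexpUato0}. This near-symmetry obstruction is the same phenomenon that rules out Lyapunov functions for RWM (Proposition~\ref{prop:RWMnoLyapunov}), to which MALA degenerates in the tails when $\beta\le 1$.
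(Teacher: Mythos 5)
Your skeleton is the paper's: Lemma~\ref{lem:rto0} splits Property~\ref{ass:relCompact} into the kernel condition~\eqref{eq:intAto1} and the Lyapunov condition~\eqref{eq:intexpUato0}; your acceptance analysis, via expansion of the log-Hastings ratio in the variable $z$ of $y=x+b(x)+\sqrt{\varepsilon}z$, reproduces Lemmas~\ref{lem:equivalentForIntA}--\ref{lem:beta2} and~\ref{lem:beta>2} with the same case boundaries (including the value $1/2$ at $\beta=2$, $\varepsilon\gamma=2$); and Properties~\ref{ass:notNegativeInf} and~\ref{ass:supU} are dispatched as in the paper. (One imprecision: for $\beta\in(4/3,2)$ the surviving term in the log-ratio is of order $|x|^{3\beta-4}$ with favourable sign, so it tends to $+\infty$ rather than to $0$; the conclusion $\int_S a(x,y)dy\to1$ is unaffected.) Where you genuinely diverge is the Lyapunov candidate: you take $U(x)=c|x|^\beta$ with $c\in(0,\gamma)$, sharpened to $c<\gamma(2-\varepsilon\gamma)/2$ and $c<1/(2\varepsilon)$ when $\beta=2$, whereas the paper's Lemma~\ref{lem:Uforbeta1to2} uses the fixed quadratic $U(x)=|x|^2/(4\varepsilon)$ throughout $1<\beta\le2$. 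Your balance of drift contraction $-\tfrac{c\varepsilon\gamma\beta^2}{2}|x|^{2\beta-2}$ against Gaussian inflation $+\tfrac{c^2\varepsilon\beta^2}{2}|x|^{2\beta-2}$ is correct at leading order and can be made rigorous for $\beta\le2$ via concavity, $|y|^\beta\le|m|^\beta+\tfrac{\beta}{2}|m|^{\beta-2}\left(|y|^2-|m|^2\right)$, which reduces the moment to an exact Gaussian quadratic MGF. This constant-tracking is in fact what makes the declared strategy close: with the crude bound $a\le J$, a quadratic $U$ produces $\int e^{U(y)-U(x)}J(y|x)dy$ growing like $e^{\mathrm{const}\cdot|x|^2}$ when $1<\beta<2$, because the inflation term (of order $|x|^2$) then dominates the contraction term (of order $|x|^\beta$); your tail-matched choice with $c<\gamma$ avoids exactly this, so on this step your route is the more robust one.

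The genuine gap is where you yourself flag it: $\beta\le1$. The device you propose---$a\ge\tfrac12 J$ on the bulk together with a ``Gaussian lower bound / Jensen argument'' on $\int e^{U(y)-U(x)}J(y|x)dy$---does not suffice for arbitrary $U\ge0$. Jensen gives the lower bound $\exp\left\{\int\left(U(y)-U(x)\right)J(y|x)dy\right\}$, and for a general nonnegative $U$ there is no control on $\int U(y)J(y|x)dy-U(x)$: take $U$ equal to $0$ off a sparse family of tall, narrow spikes; at spike points the Jensen bound is vacuous, and your proposal gives no rule for locating the good points for an arbitrary $U$. Since \eqref{eq:intexpUato0} concerns the limit over all $|x|\to\infty$, contradicting it requires exhibiting a specific sequence along which the integral stays bounded away from $0$, and the selection of that sequence is the missing idea. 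The paper's Lemmas~\ref{lem:beta0to1} and~\ref{lem:beta1} (mirroring the RWM argument in Proposition~\ref{prop:RWMnoLyapunov}) do this by infimum extraction: choose $x_n$ with $|x_n|\ge n$ and $U(x_n)\le\inf_{|y|\ge n}U(y)+\varepsilon$; then on the half-space $H_n=\{w:\langle w,x_n\rangle\ge0\}$ one has $|x_n+w|\ge|x_n|\ge n$, hence $U(x_n+w)\ge U(x_n)-\varepsilon$, so the exponential factor is at least $e^{-\varepsilon}$ there; finally, because for $\beta\le1$ the drift $b(x)$ stays bounded, the proposal places asymptotic mass $c\in(0,1)$ on $H_n$ (with $c=1/2$ for $\beta<1$), and acceptance tending to $1$ prevents the Hastings ratio from destroying that mass, yielding $\liminf_n\int e^{U(y)-U(x_n)}a(x_n,y)dy\ge ce^{-\varepsilon}>0$, the desired contradiction. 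You correctly name the phenomenon (near-symmetry of the kernel in the tails, as for RWM), but Jensen is not a substitute for the infimum-extraction/half-space step that makes the phenomenon bite for every admissible $U$.
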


\subsection{Random Walk Metropolis algorithm}
\label{sec:RWM}
The RWM is characterized by proposal densities $J(y|x)$ of the form 
\begin{equation*}
    J(y|x)=\hat J(y-x)=\hat J(x-y),
\end{equation*}
where, with an abuse of notation, $\hat J(t)=\hat J(-t)$ is the density of some probability distribution in $\hat J(\cdot)\in\mathcal{P}(S)$. The proposal density is therefore symmetric, i.e. $J(y|x)=J(x|y)$, and the Hastings ratio \eqref{eq:HastingsRatio} simplifies to
\begin{equation*}
    \varpi(x,y)=\min\left\{1,\frac{\pi(y)}{\pi(x)}\right\}.
\end{equation*}
With the following proposition we show that \ref{ass:compactSpaceOrLyapunov} cannot hold when employing the RWM proposal.

\begin{proposition}
\label{prop:RWMnoLyapunov}
    Let 
    \begin{equation}
        \label{eq:RWMJ}
        J(y|x)=\hat J(y-x) = \hat J(x-y)
    \end{equation}
    be the proposal density in the RWM. There exists no function $U:S \to[0,\infty)$ that satisfies \ref{ass:compactSpaceOrLyapunov}.
\end{proposition}
The proof of Proposition~\ref{prop:RWMnoLyapunov} is provided in Section~\ref{sec:proofRWM}.

\section{LDP for MH chains, Lyapunov condition and geometric ergodicity: Summary and discussion}
\label{sec:conjecture}
In Section \ref{sec:main}, we consider different instances of the Metropolis-Hastings algorithm and show under what conditions Assumption~\ref{ass:compactSpaceOrLyapunov} is satisfied. In the first two examples--IMH and MALA--we find a Lyapunov function that satisfies \ref{ass:compactSpaceOrLyapunov} for certain values of the algorithms' hyperparameters. However, for RWM we instead prove that a Lyapunov function as in~\ref{ass:compactSpaceOrLyapunov} cannot exist for any combination of proposal and target distributions. 

  Assumption~\ref{ass:compactSpaceOrLyapunov}, in conjunction with \ref{ass:targetAbsContLambda} and \ref{ass:proposalDistributionAbsCont}, guarantees that the empirical measure of the algorithm's Markov chain satisfies an LDP with speed and rate function described in Section~\ref{sec:LDP}. This combined with the results of Sections \ref{sec:IMH}-\ref{sec:MALA} allow us to state Theorems~\ref{thm:LDP_indep} and \ref{thm:LDP_MALA}, providing LDPs for the empirical measures of IMH and MALA chains, respectively, under certain conditions on their parameters. For MALA, we believe that Proposition \ref{prop:MALA} and Theorem \ref{thm:LDP_MALA} can be extended to target distributions $\pi\in\mathcal{P}(\mathbb{R}^d)$ of the form
    \begin{align}
    \label{eq:pi_exp_gamma_x_beta}
        \pi(x)\propto e^{-\gamma| x|^\beta},\qquad |x|\ge R,
    \end{align}
    for some $R>0$, that is, the restriction is only on the tail decay. For the RWM, we emphasise that the fact that Assumption~\ref{ass:compactSpaceOrLyapunov} can never be satisfied does not imply that an empirical measure LDP cannot hold for MH chains associated with RWM dynamics. Rather, we believe that an LDP should exist for certain choices of proposal and target, and that the three assumptions \ref{ass:targetAbsContLambda}-\ref{ass:compactSpaceOrLyapunov} in this case are only sufficient, not necessary, for an LDP. Should such an LDP hold, the associated rate function does not have to agree with that of Theorem \ref{thm:LDP_SPA}. Because of some similarities with reflected Brownian motion and related constrained processes, the techniques and results of \cite{BD03} may be well-suited to treat the RWM case, as well as the ``boundary case'' $\beta = 1$ in MALA (see Table \ref{tab:GE} and the discussion below).

We now compare our results to existing results on geometric ergodicity for the MH chains corresponding to the three classes of MCMC samplers. The conclusions are summarised in Table \ref{tab:GE}.

\begin{table}[b]
  \centering
  \begin{tabular}{cccc}
    \multicolumn{2}{c}{} & \shortstack{Assumption \\ \ref{ass:compactSpaceOrLyapunov}} & \shortstack{Geometric\\ergodicity}\\
    \toprule
    \multirow{2}{*}{IMH} & $\alpha=\beta$ and $\eta>\gamma$,
         or $\alpha > \beta$& \cellcolor{light green}\ding{51} & \cellcolor{light green}\ding{51}  \\ 
    &otherwise & \cellcolor{brick!50}\ding{55} & \cellcolor{brick!50}\ding{55} \\
    \hline
    \multirow{3}{*}{\makecell{MALA\\$d=1$}} &  $\beta=2$ and $\varepsilon\gamma<2$, or $\beta\in(1,2)$& \cellcolor{light green}\ding{51}  & \cellcolor{light green}\ding{51}  \\
    & $\beta=1$ & \cellcolor{brick!50}\ding{55}  & \cellcolor{light green}\ding{51}$^*$  \\
    & $\beta=2$ and $\varepsilon\gamma\ge2$, or $\beta\in(0,1)\cup(2,+\infty)$&\cellcolor{brick!50}\ding{55}&\cellcolor{brick!50}\ding{55}\\
    \hline

    \multirow{3}{*}{\makecell{MALA\\$d\ge1$}} &  $\beta=2$ and $\varepsilon\gamma<2$, or $\beta\in(1,2)$& \cellcolor{light green}\ding{51}  & \cellcolor{light green}\ding{51}  \\
    & $\beta=1,  \sqrt{\varepsilon}\gamma\gg0$ & \cellcolor{brick!50}\ding{55}& \cellcolor{light green}\ding{51}  \\
    &$\beta=2$ and $\varepsilon\gamma>2$, or $\beta\in(2,+\infty)$&\cellcolor{brick!50}\ding{55}&\cellcolor{brick!50}\ding{55}\\
    \hline
    \multirow{2}{*}{RWM} & tail decays as in \cite{mengersen1996rates} 
    &\cellcolor{brick!50}\ding{55} & \cellcolor{light green}\ding{51} \\
    & otherwise &\cellcolor{brick!50}\ding{55} & \cellcolor{brick!50}\ding{55} \\
    \bottomrule
    \multicolumn{4}{p{0.65\linewidth}}{\footnotesize $^*$The result on geometric ergodicity of MALA with $\beta=1$ in dimension $d=1$ refers to~\cite{roberts1996exponential}, where the state space considered is $S=(0,+\infty)$.}
  \end{tabular}
  \caption{\label{tab:GE}\footnotesize{Summary of the results from Sections~\ref{sec:IMH}-\ref{sec:RWM}, including existing results on geometric ergodicity. For IMH, the target and proposal are taken to be on the forms $\pi(x)\propto \exp\{-\eta|x|^\alpha\}$ and $f(x)\propto \exp\{-\gamma|\eta|^\beta\}$, respectively. For MALA, the target is on the form $\pi(x)\propto -\gamma|x|^\beta$, and the MALA proposal becomes \eqref{eq:MALAproposal}; here the results are split into two cases, corresponding to the different results on geometric ergodicity for $d=1$, considered in \cite{roberts1996exponential} (second row), and the more general case analysed in \cite{RZ23} (third row). Note that the results from~\cite{roberts1996exponential} on geometric ergodicity for MALA with $\beta=1$ in dimension $d=1$ holds on the state space $(0,+\infty)$. For RWM the results refer to any proposal of the form \eqref{eq:RWMJ}.}} 
  \end{table}

For IMH, in \cite{mengersen1996rates} Mengersen and Tweedie show that the associated Markov chain is uniformly ergodic, and hence geometrically ergodic, if the proposal density is bounded below by a multiple of the target; otherwise even geometric ergodicity fails. In Proposition~\ref{prop:LyapunovForIMH}, we characterise the conditions on the parameters $\eta,\gamma,\alpha,\beta$ that are necessary and sufficient for the existence of a Lyapunov function $U$ satisfying Property~\ref{ass:relCompact} of \ref{ass:compactSpaceOrLyapunov}. It turns out that these conditions correspond to the cases where the target density $\pi$ has lighter tails than the proposal density $\hat J$, i.e., precisely the cases where the IMH chain is uniformly ergodic.

For MALA, we compare the results of Proposition~\ref{prop:MALA} to the analysis in \cite{roberts1996exponential}. Therein, Roberts and Tweedie analyse one-dimensional target distributions $\pi\in\mathcal{P}(\mathbb{R})$ of the form \eqref{eq:pi_exp_gamma_x_beta}. Thus, in the case of $d=1$, the tail behaviour is the same as in Proposition~\ref{prop:MALA}. The result of their analysis states that the MALA Markov chain is geometrically ergodic when $1<\beta<2$, and when $\beta=2$ and $\varepsilon\gamma<2$. These are precisely the cases in Proposition~\ref{prop:MALA} for which a Lyapunov function exists, and therefore the LDP in Theorem~\ref{thm:LDP_MALA} holds. In \cite{roberts1996exponential} it is also shown that the Markov chain associated with MALA, with the given forms of $\pi$ and $J$, is not geometrically ergodic when $\beta\in(0,1)$, $\beta>2$ or $\beta=2$ and $\varepsilon\gamma\ge2$. For the same values of parameters $\beta,\gamma$ and $\varepsilon$, Proposition~\ref{prop:MALA} states that there does not exist a function $U$ satisfying Assumption~\ref{ass:compactSpaceOrLyapunov}. For the remaining case, $\beta=1$, Roberts and Tweedie cite an argument from \cite{meyn2009markov} showing that the resulting Markov chain is geometrically ergodic for positive $x$. However, our result states that when $\beta=1$ there cannot exist a function $U$ satisfying \ref{ass:compactSpaceOrLyapunov}. 

Results similar to those of \cite{roberts1996exponential} can be obtained also for arbitrary dimensions $d\in\mathbb{N}$ by applying the results from \cite{RZ23}, where Roy and Zhang provide sufficient conditions for geometric ergodicity of Markov chains arising from MALA. If we apply their results (Theorem 1 in \cite{RZ23}) to the setting of Proposition~\ref{prop:MALA}, we obtain that if the conditions \textit{i)} and \textit{ii)} are satisfied, then the MALA chain is geometrically ergodic. In addition, by the same theorem, when $\beta=1$, if the product $\sqrt{\varepsilon}\gamma$ is sufficiently large, the sufficient conditions for geometric ergodicity hold. Consistent with the one-dimensional case, this is an example where the MALA chain is geometrically ergodic, but Assumption~\ref{ass:compactSpaceOrLyapunov} is not satisfied.  Moreover, in the same paper necessary conditions for geometric ergodicity are also derived. Using such results (Theorem 4 in \cite{RZ23}), we establish that if $\beta>2$, or $\beta = 2$ and $\varepsilon\gamma>2$ (cases where Assumption~\ref{ass:compactSpaceOrLyapunov} does not hold), the MALA chain is not geometrically ergodic; we omit the details.

For RWM, Proposition \ref{prop:RWMnoLyapunov} can be related to Theorem 3.1 in \cite{mengersen1996rates}, which states that the Markov chain generated via the RWM algorithm with a proposal distribution of the form under consideration is not uniformly ergodic for any $\pi$. However, in the same paper, Mengersen and Tweedie show that imposing additional assumptions on the target leads to the associated MH chain being geometrically ergodic. In this case, we suspect that if the chain is geometrically ergodic, then an LDP does hold. This is not in conflict with the result of Proposition~\ref{prop:RWMnoLyapunov}, according to which Assumption~\ref{ass:compactSpaceOrLyapunov} is not satisfied for \textit{any} $U$, even when specific tail decays are imposed. In fact, as discussed above, Assumptions~\ref{ass:targetAbsContLambda}-\ref{ass:compactSpaceOrLyapunov} are sufficient for an LDP, and we believe that they are not necessary.

Table \ref{tab:GE} shows that for IMH and the bulk of MALA samplers, i.e., excluding the case $\beta = 1$, conditions for geometric ergodicity coincide precisely with those guaranteeing Assumption \ref{ass:compactSpaceOrLyapunov}, and by extension the cases where we have an affirmative answer to whether or not an LDP holds for the underlying empirical measures. Moreover, as the discussion above hints at, there are reasons to believe that an LDP will hold also in the cases where Assumption \ref{ass:compactSpaceOrLyapunov} is not satisfied but the MH chain is geometrically ergodic. Specifically, Property~\ref{ass:relCompact} is what makes Assumption \ref{ass:compactSpaceOrLyapunov} too strict for some MH chains. To highlight this, consider the case where geometric ergodicity, i.e., the drift and minorization conditions, holds. Proposition \ref{prop:PropA3a} shows that Property~\ref{ass:notNegativeInf} also holds in this setting. Moreover, Property~\ref{ass:supU} is a reasonable assumption on $U$, as, e.g., continuity would be enough, and this holds for the standard functions used to satisfy the drift condition. The missing part is thus Property~\ref{ass:relCompact}.

\section{Proofs of main results}
\label{sec:proofs}
\subsection{Proof for the Independent Metropolis-Hastings algorithm}
\label{sec:proofIMH}
\begin{proof}[Proof of Proposition \ref{prop:LyapunovForIMH}]
    Start by considering Property~\ref{ass:relCompact} in Assumption~\ref{ass:compactSpaceOrLyapunov}. By Lemma~\ref{lem:rto0}, this property is equivalent to satisfying both \eqref{eq:intAto1} and \eqref{eq:intexpUato0}. In the IMH case, limit \eqref{eq:intAto1} can be rewritten as
    \begin{equation}
    \label{eq:IMHrto0}
        \lim_{|x|\to\infty}\int_S\min\left\{1,\frac{\pi(y)\hat J(x)}{\pi(x)\hat J(y)}\right\}\hat J(y)dy=1.
    \end{equation}
    With our choice of $\pi$ and $\hat J$ we have the following pointwise convergence:
      \begin{align*}
        \lim_{|x|\to\infty}\frac{\pi(y)\hat J(x)}{\pi(x)\hat J(y)} &=  e^{-\eta|y|^\alpha+\gamma|y|^\beta}\lim_{|x|\to\infty}e^{\eta|x|^\alpha-\gamma|x|^\beta}=\begin{cases}
            0\quad\quad \text{if}\;\alpha<\beta, \text{ or } \alpha=\beta\;\text{and}\;\eta<\gamma,\\
            1\quad\quad \text{if}\;\alpha=\beta\;\text{and}\;\eta=\gamma,\\
            +\infty\quad \text{if}\;\alpha>\beta,\text{ or }\alpha=\beta\;\text{and}\;\eta>\gamma,\\
        \end{cases}
    \end{align*}
    and therefore
      \begin{align*}
        \lim_{|x|\to\infty}\min\left\{1,\frac{\pi(y)\hat J(x)}{\pi(x)\hat J(y)}\right\} =  \begin{cases}
            0\quad\quad \text{if}\;\alpha<\beta, \text{ or } \alpha=\beta\;\text{and}\;\eta<\gamma,\\
            1\quad\quad \text{if}\;\alpha=\beta\;\text{and}\;\eta\ge\gamma, \text{ or } \alpha >\beta.
        \end{cases}
    \end{align*}
    Because $\hat J$ is, by definition, a probability density, by dominated convergence the limit in \eqref{eq:IMHrto0}, and therefore the first condition \eqref{eq:intAto1} in Lemma~\ref{lem:rto0}, is satisfied if and only if
    \begin{equation*}
        \lim_{|x|\to\infty}\min\left\{1,\frac{\pi(y)\hat J(x)}{\pi(x)\hat J(y)}\right\}=1,
    \end{equation*}
   i.e. if and only if $\alpha=\beta$ and $\eta\ge\gamma$, or $\alpha > \beta$.

    We will now show that by choosing $U(x)=\frac{\gamma}{2} |x|^\beta$, the limit \eqref{eq:intexpUato0} in Lemma~\ref{lem:rto0} is satisfied for any choice of $\eta,\gamma,\alpha,\beta>0$. Therefore, using the above results related to verifying the limit \eqref{eq:intAto1}, we have that Property~\ref{ass:relCompact} in Assumption~\ref{ass:compactSpaceOrLyapunov} is satisfied if and only if $\alpha=\beta$ and $\eta\ge\gamma$, or $\alpha > \beta$.

    Let $C\in\mathbb{R}$ such that $\hat J(y)=Ce^{-\gamma|x|^\beta}$. If $U(x)=\frac{\gamma}{2} |x|^\beta$, the limit in \eqref{eq:intexpUato0} becomes 
    \begin{align*}
        \lim_{|x|\to\infty}\int_Se^{\frac{\gamma}{2} |y|^\beta-\frac{\gamma}{2} |x|^\beta}\min\left\{1,\frac{\pi(y)\hat J(x)}{\pi(x)\hat J(y)}\right\}\hat J(y)dy&\le \lim_{|x|\to\infty}\int_Se^{\frac{\gamma}{2} |y|^\beta-\frac{\gamma}{2} |x|^\beta}\cdot1\cdot Ce^{-\gamma|y|^\beta}dy\\
        &=C\int_Se^{-\frac{\gamma}{2} |y|^\beta}dy\lim_{|x|\to\infty}e^{-\frac{\gamma}{2} |x|^\beta}=0.
    \end{align*}
    Thus, $U(x)=\frac{\gamma}{2} |x|^\beta$ satisfies Property~\ref{ass:relCompact} in Assumption~\ref{ass:compactSpaceOrLyapunov}.

    We proceed by showing that this choice of $U(x)$ also satisfies Property~\ref{ass:notNegativeInf}. Because $a(x,y)\le \hat J(y)=Ce^{-\gamma|x|^\beta}$ and $r(x)\le 1$,
    \begin{align*}
        \inf_{x\in S}&F_U(x)=\inf_{x\in S}\left\{-\log\left(\int_Se^{\frac{\gamma}{2}|y|^\beta-\frac{\gamma}{2}|x|^\beta}a(x,y)dy+r(x)\right)\right\}\\
        &\ge-\sup_{x\in S}\left\{\log\left(\int_Se^{\frac{\gamma}{2}|y|^\beta-\frac{\gamma}{2}|x|^\beta}\cdot Ce^{-\gamma|y|^\beta}dy+1\right)\right\}=-\log\left(C\int_Se^{-\frac{\gamma}{2}|y|^\beta}dy\cdot\sup_{x\in S}e^{-\frac{\gamma}{2}|x|^\beta}+1\right)\\
        &=-\log\left(C\int_Se^{-\frac{\gamma}{2}|y|^\beta}dy\cdot 1+1\right)>-\infty.
    \end{align*}
 Thus, $U(x)=\frac{\gamma}{2}|x|^\beta$ satisfies Property~\ref{ass:notNegativeInf} in Assumption~\ref{ass:compactSpaceOrLyapunov}.

    Lastly, since $U(x)=\frac{\gamma}{2}|x|^\beta$ is continuous, it is bounded on every compact set, hence it also satisfies Property~\ref{ass:supU} in~\ref{ass:compactSpaceOrLyapunov}. This completes the proof.
 \end{proof}

\subsection{Proof for the Metropolis-adjusted Langevin algorithm}
\label{sec:proofMALA}

\begin{proof}[Outline of proof for Proposition 
\ref{prop:MALA}]
The proof of Proposition \ref{prop:MALA} relies on a series of Lemmas, presented in the \hyperref[appn]{Appendix}, where we consider separately different ranges of the parameters $\varepsilon,\gamma$ and $\beta$.

We start by deriving, in Lemma~\ref{lem:equivalentForIntA}, a condition equivalent to the necessary condition \eqref{eq:intAto1} in Lemma~\ref{lem:rto0}, formulated specifically for MALA. Next, we analyze conditions \eqref{eq:intAto1} and \eqref{eq:intexpUato0}, the former via Lemma \ref{lem:equivalentForIntA}, for different ranges of the parameters $\beta, \gamma$ and $\varepsilon$. In Lemmas \ref{lem:beta0to1}-\ref{lem:beta1} we show that for $\beta \in (0,1]$, condition \eqref{eq:intAto1} holds but there does not exist a function $U$ satisfying \eqref{eq:intexpUato0}. Next, in Lemmas \ref{lem:beta1to2}-\ref{lem:beta2} we consider the necessary condition \eqref{eq:intAto1} for $\beta \in (1,2)$ and $\beta =2$, respectively. In Lemma \ref{lem:Uforbeta1to2}, for $\beta \in (1,2]$ and $\gamma, \varepsilon$ such that \eqref{eq:intAto1} holds, we construct a function $U$ satisfying \ref{ass:compactSpaceOrLyapunov}. Lastly, in Lemma \ref{lem:beta>2}, we show that for $\beta >2$, even the condition \eqref{eq:intAto1} related to the MH kernel is violated. 

Proposition \ref{prop:MALA} is an immediate consequence of the combination of these lemmas. In fact, Lemmas~\ref{lem:beta1to2}, \ref{lem:beta2} and \ref{lem:Uforbeta1to2} show that Assumption~\ref{ass:compactSpaceOrLyapunov} is satisfied if $\beta=2$ and $\varepsilon\gamma<2$, or $1<\beta<2$. In all other cases, at least one of the two conditions \eqref{eq:intAto1} or \eqref{eq:intexpUato0} is not satisfied, as shown in Lemmas~\ref{lem:beta0to1}, \ref{lem:beta1}, \ref{lem:beta2} and \ref{lem:beta>2}. It follows by Lemma~\ref{lem:rto0} that there cannot exists a $U$ that satisfies Property~\ref{ass:relCompact}, and therefore Assumption~\ref{ass:compactSpaceOrLyapunov} does not hold for such choices of $\beta,\varepsilon,\gamma$.
\end{proof}

\subsection{Proof for the Random Walk Metropolis algorithm}
\label{sec:proofRWM}
\begin{proof}[Proof of Proposition \ref{prop:RWMnoLyapunov}]
When the proposal density $J(y|x)$ is of random walk type \eqref{eq:RWMJ}, the probability of accepting any proposal from state $x\in S$ can be written as
\begin{equation*}
    \int_S a(x,y)dy = \int_S\varpi(x,y)\hat J(y-x)dy=\int_S\varpi(x,x+t)\hat J(t)dt,
\end{equation*}
where we applied the change of variable $y\mapsto x+t$. Therefore, in the RWM case, the two necessary conditions \eqref{eq:intAto1} and \eqref{eq:intexpUato0} for Property~\ref{ass:relCompact} of~\ref{ass:compactSpaceOrLyapunov} (see Lemma~\ref{lem:rto0})  can be expressed as
    \begin{align}
    \label{eq:RWMlimrto0}
        \begin{split}
        \lim_{|x|\to\infty}\int_S\varpi(x,x+t)\hat J(t)dt=1,
        \end{split}
    \end{align}
and,
    \begin{align}
        \lim_{|x|\to\infty}e^{-U(x)}\int_Se^{U(x+t)}\varpi(x,x+t)\hat J(t)dt=0,\label{eq:RWMintegral}
    \end{align}
    respectively.

    To show that Assumption~\ref{ass:compactSpaceOrLyapunov} cannot be satisfied with the RWM kernel, we assume the necessary condition \eqref{eq:RWMlimrto0} holds, and prove that there cannot then also exist a function $U$ satisfying~\eqref{eq:RWMintegral}.

    Assume, by contradiction, that there exists a function $U:S\to[0,\infty)$ satisfying \eqref{eq:RWMintegral} and construct a sequence $\{U_n\}_{n\in\mathbb{N}}\subset [0,\infty)$ with terms $U_n = \inf_{|y|\ge n}U(y)$. For a fixed $\varepsilon>0$, for each $n\in\mathbb{N}$, there exists $x_n\in S$ such that $|x_n|\ge n$ and
    \begin{equation}
    \label{eq:RWMinequalityU}
        U_n\le U(x_n)< U_n+\varepsilon.
    \end{equation}
    Let $\{x_n\}\subset S$ be such a sequence, i.e., $|x_n| \geq n$ and \eqref{eq:RWMinequalityU} holds for $x_n$. By construction, $\lim_{n\to\infty}|x_n|=+\infty$, and if \eqref{eq:RWMintegral} holds then it also follows that
    \begin{equation}
    \label{eq:RWMintegralSubseq}
        \lim_{n\to\infty}e^{-U(x_n)}\int_Se^{U(x_n+t)}\varpi(x_n,x_n+t)\hat J(t)dt=0.
    \end{equation}
    For each $n$, let $H_n=\{t\, : \, \langle x_n,t\rangle \ge 0\}\subset S$. Observe that for $t\in H_n$, 
    \begin{equation*}
        |x_n+t|=\sqrt{|x_n|^2+|t|^2+2\langle x_n,t\rangle}\ge |x_n| \ge n,
    \end{equation*}
    hence,
    \begin{equation}
    \label{RWM:ineqUonHn}
        U(x_n+t)\ge \inf_{|y|\ge n}U(y)=U_n.
    \end{equation}
    Moreover, $S\setminus H_n = - H_n^{\circ}$ for all $n$, and since $\hat J$ is a symmetric measure (i.e. $\hat J(B)=\hat J(-B)$ for all measurable sets $B$) and $\hat J\ll \lambda$, we have that $\hat J(H_n) = \hat J(H_n^\circ) = \hat J(- H_n^\circ)=\hat J (S\setminus H_n)$. Combining this with $\hat J (H_n)+\hat J(S\setminus H_n)= \hat J(S) = 1$, we obtain $\int_{H_n}\hat J(t)dt=\frac{1}{2}$, which we use to determine
    \begin{equation*}
        \lim_{n\to\infty}\int_{H_n}\varpi(x_n,x_n+t)\hat J(t)dt.
    \end{equation*}
    Because $\varpi(x,y)\le 1 $ for all $x,y\in S$, for any $n \in \mathbb{N}$ we have
    \begin{equation}
    \label{eq:UB_RWM}
        \int_{H_n}\varpi(x_n,x_n+t)\hat J(t)dt \le\int_{H_n}\hat J(t)dt=\frac{1}{2}.
    \end{equation}
    In the other direction, under the assumption that \eqref{eq:RWMlimrto0} holds, we have the asymptotic lower bound
    \begin{equation}
    \label{eq:LB_RWM}
    \begin{split}
\liminf_{n\to\infty}\int_{H_n}&\varpi(x_n,x_n+t)\hat J(t)dt = \liminf_{n\to\infty}\left(\int_{S}\varpi(x_n,x_n+t)\hat J(t)dt-\int_{S\setminus H_n}\varpi(x_n,x_n+t)\hat J(t)dt\right)\\
&\ge \liminf_{n\to\infty}\int_{S}\varpi(x_n,x_n+t)\hat J(t)dt+\liminf_{n\to\infty}\left(-
\int_{S\setminus H_n}\varpi(x_n,x_n+t)\hat J(t)dt\right)\\
&=1-\limsup_{n\to\infty}\int_{S\setminus H_n}\varpi(x_n,x_n+t)\hat J(t)dt\ge1-\limsup_{n\to\infty}\int_{S\setminus H_n}\hat J(t)dt=1-\frac{1}{2}=\frac{1}{2}.
\end{split}
    \end{equation}
    Combining the upper and lower bounds \eqref{eq:UB_RWM} and \eqref{eq:LB_RWM},  if \eqref{eq:RWMlimrto0} holds we also have
    \begin{equation}
    \label{eq:RWMvarpiOnHn}
       \lim_{n\to\infty}\int_{H_n}\varpi(x_n,x_n+t)\hat J(t)dt = \frac{1}{2}. 
    \end{equation}

Using \eqref{eq:RWMinequalityU}, \eqref{RWM:ineqUonHn} and \eqref{eq:RWMvarpiOnHn} we can bound the term on the left-hand side of \eqref{eq:RWMintegralSubseq} from below as follows:
\begin{align*}
        \lim_{n\to\infty}e^{-U(x_n)}\int_Se^{U(x_n+t)}\varpi(x_n,x_n+t)\hat J(t)dt  \ge \lim_{n\to\infty}e^{-U(x_n)}\int_{H_n}e^{U(x_n+t)}\varpi(x_n,x_n+t)\hat J(t)dt\\
        \ge \lim_{n\to\infty}e^{-U_n-\varepsilon}\int_{H_n}e^{U_n}\varpi(x_n,x_n+t)\hat J(t)dt = e^{-\varepsilon}\lim_{n\to\infty}\int_{H_n}\varpi(x_n,x_n+t)\hat J(t)dt=\frac{e^{-\varepsilon}}{2}>0.
    \end{align*}
This contradicts the necessary condition \eqref{eq:RWMintegral}, which requires the limit to be equal to 0. Therefore, \eqref{eq:RWMlimrto0} and \eqref{eq:RWMintegral} cannot both hold in this setting. Using Lemma \ref{lem:rto0} we conclude that a Lyapunov function $U$ satisfying Assumption~\ref{ass:compactSpaceOrLyapunov} cannot exist when the proposal distribution is of RWM type.   
\end{proof}


\begin{appendix}
\label{appn}
\section*{Appendix: Proofs for the Metropolis-adjusted Langevin Algorithm}
In this section, we establish Lemmas~\ref{lem:equivalentForIntA}-\ref{lem:beta>2}, which are used in the proof of Proposition~\ref{prop:MALA} to study the assumptions of the large deviation principle for MALA.

Throughout this section we use the following quantities repeatedly: for $x, t \in S$, let
\begin{equation}
    \label{eq:yChangeVar}
        y(x,t)=t+x- \frac{\varepsilon\gamma\beta}{2}|x|^{\beta-2}x,
    \end{equation}
    and 
    \begin{align}
        \label{eq:MALAg}
        \begin{split}
        g(x,t)=&-\gamma\left(1-\frac{\beta}{2}\right)(|y(x,t)|^\beta-|x|^\beta)-\frac{\varepsilon}{8}(\gamma\beta)^2\left(|y(x,t)|^{2\beta-2}-|x|^{2\beta-2}\right)\\
        &-\frac{\gamma\beta}{2}\left(|y(x,t)|^{\beta-2}-|x|^{\beta-2}\right)\langle x, y(x,t)\rangle.
        \end{split}
    \end{align}
With this definition for $y(x,t)$, we have
\begin{equation}
\label{eq:innerPrody}
        \langle x ,y(x,t)\rangle=|x|^2-\frac{\varepsilon\gamma\beta}{2}|x|^{\beta} + \langle t,x \rangle.
\end{equation}

\begin{lemma}
\label{lem:equivalentForIntA}
    The necessary condition \eqref{eq:intAto1} in Lemma~\ref{lem:rto0} is satisfied if and only if, for almost all $t\in\mathbb{R}^d$,
    \begin{equation}
        \label{MALAprooflimitge0}
        \liminf_{|x|\to\infty}g(x,t)\ge0.
    \end{equation}
\end{lemma}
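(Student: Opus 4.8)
The plan is to recast the condition \eqref{eq:intAto1} as a statement about a Gaussian average of the acceptance probability and then read off the equivalence with \eqref{MALAprooflimitge0}. First I would perform, for fixed $x$, the change of variables $y=y(x,t)$ from \eqref{eq:yChangeVar}; since this is a translation of $y$ it has unit Jacobian, so $dy=dt$. Under the substitution the proposal density becomes the centred Gaussian $J(y(x,t)\mid x)=C\,e^{-|t|^2/(2\varepsilon)}=:\phi(t)$, the density of $N(0,\varepsilon I_d)$, which integrates to $1$. The key algebraic step is to verify that the logarithm of the Hastings ratio \eqref{eq:HastingsRatio}, evaluated at $y=y(x,t)$, equals exactly $g(x,t)$: expanding $\log(\pi(y)/\pi(x))=-\gamma(|y|^\beta-|x|^\beta)$ and $\log\big(J(x\mid y)/J(y\mid x)\big)$, and simplifying the inner-product terms with the help of \eqref{eq:innerPrody}, the resulting expression collapses precisely into the three lines of \eqref{eq:MALAg}. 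Consequently $a(x,y(x,t))=\min\{1,e^{g(x,t)}\}\,\phi(t)$, and
\[
\int_S a(x,y)\,dy=\int_{\mathbb{R}^d}\min\{1,e^{g(x,t)}\}\,\phi(t)\,dt .
\]
Since $\int\phi=1$ and $\min\{1,e^{g}\}=1-(1-e^{g})_+$, where $(\cdot)_+$ denotes positive part, the condition \eqref{eq:intAto1} is equivalent to $\lim_{|x|\to\infty}\int_{\mathbb{R}^d}(1-e^{g(x,t)})_+\,\phi(t)\,dt=0$, which is the form I would work with.

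For the ``if'' direction, suppose $\liminf_{|x|\to\infty}g(x,t)\ge 0$ for a.e.\ $t$, and fix any sequence $x_n$ with $|x_n|\to\infty$. For a.e.\ $t$ and every $\delta>0$ we then have $g(x_n,t)>-\delta$ for all large $n$, hence $(1-e^{g(x_n,t)})_+\le 1-e^{-\delta}$ eventually; letting $\delta\downarrow 0$ gives $(1-e^{g(x_n,t)})_+\to 0$ for a.e.\ $t$. Since the integrand is dominated by $\phi\in L^1(\mathbb{R}^d)$, dominated convergence yields $\int(1-e^{g(x_n,t)})_+\phi\,dt\to 0$, and as the sequence was arbitrary this establishes \eqref{eq:intAto1}.

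The main obstacle is the converse, where one must upgrade the vanishing of the integral to the pointwise statement \eqref{MALAprooflimitge0}. This is genuinely delicate: $L^1$-convergence of $(1-e^{g(x,\cdot)})_+$ to $0$ does not by itself control $\liminf_{|x|\to\infty}g(x,t)$, because a priori the negative part of $g$ could concentrate on $t$-sets of vanishing $\phi$-mass that nonetheless sweep across every $t$ as $|x|\to\infty$ (so that Fatou only delivers a sequence- and limsup-type bound, not the required a.e.\ liminf). I would therefore argue by contradiction, assuming $\liminf_{|x|\to\infty}g(x,t)<0$ on a set $E$ with $\int_E\phi>0$, and reduce to a fixed threshold by writing $E=\bigcup_m\{t:\liminf_{|x|\to\infty}g(x,t)<-1/m\}$, so that some $E_{m_0}$ carries positive $\phi$-mass. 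For $t\in E_{m_0}$ the slice $\{x:g(x,t)<-1/m_0\}$ is unbounded, and using the joint continuity of $g$ in $(x,t)$ together with a direct asymptotic analysis of $g(x,t)$ as $|x|\to\infty$ I would show that these bad slices carry non-negligible Lebesgue mass at every large radius rather than escaping, and thereby extract a single sequence $x_n\to\infty$ along which $\int_{E_{m_0}}(1-e^{g(x_n,t)})_+\,\phi\,dt\ge(1-e^{-1/m_0})\,c$ for some $c>0$, contradicting the vanishing of the integral. This extraction of a common sequence from the $t$-dependent slices—that is, excluding the ``escaping mass'' scenario—is the step I expect to require the most care, and it is precisely here that the specific structure of the MALA ratio $g(x,t)$, rather than soft measure theory alone, must be invoked.
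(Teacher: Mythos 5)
Your reduction coincides exactly with the paper's: the same change of variables $t=y-x+\frac{\varepsilon\gamma\beta}{2}|x|^{\beta-2}x$ (unit Jacobian), the same identification of the logarithm of the Hastings ratio at $y=y(x,t)$ with $g(x,t)$ from \eqref{eq:MALAg}, giving $\int_S a(x,y)\,dy=\int_S\min\{1,e^{g(x,t)}\}\,\hat J(t)\,dt$ with $\hat J(t)=Ce^{-|t|^2/(2\varepsilon)}$ a probability density, and the same dominated-convergence argument for the implication \eqref{MALAprooflimitge0} $\Rightarrow$ \eqref{eq:intAto1}. Up to that point your proposal is correct and is precisely the paper's proof.

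The genuine gap is the converse implication, which you leave as a plan rather than a proof: the decisive step---showing that the bad slices $\{x\,:\,g(x,t)<-1/m_0\}$ cannot ``sweep'' across $t$-space, so that a \emph{single} sequence $x_n\to\infty$ carrying non-negligible bad mass can be extracted---is deferred to ``joint continuity together with a direct asymptotic analysis of $g$'' and never executed. This is not a routine detail: joint continuity alone cannot exclude the escaping-mass scenario (continuous bumps of shrinking width sweeping over $t$ give counterexamples to the purely measure-theoretic statement), so the unexecuted structural analysis is the entire content of this direction; as written, you have proved only one half of the equivalence. For comparison, the paper dispatches this half in one line, asserting that dominated convergence gives an ``if and only if''; taken literally, that assertion has the same lacuna you identified. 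Two observations, however, show how to close your gap cheaply. First, every later invocation of the converse (Lemma~\ref{lem:beta2} with $\varepsilon\gamma>2$, and Lemma~\ref{lem:beta>2}) occurs in a regime where the genuine limit $g(x,t)\to-\infty$ holds for a.e.\ $t$; there, Fatou's lemma applied to $1-\min\{1,e^{g}\}\ge0$ already gives $\limsup_{|x|\to\infty}\int_S a(x,y)\,dy<1$, with no extraction argument needed. Second, what soft arguments \emph{do} prove is that \eqref{eq:intAto1} implies $\limsup_{|x|\to\infty}g(x,t)\ge0$ for a.e.\ $t$: if $\limsup_{|x|\to\infty}g(x,t)<-1/m_0$ on a set of positive $\hat J$-measure, then this limsup controls $g$ along \emph{every} sequence $x_n\to\infty$ (unlike the liminf, which only controls some $t$-dependent sequence), and Fatou yields a uniform deficit in the integral. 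It is only the stronger $\liminf$ form appearing in the statement that requires the missing argument. So either carry out the extraction using the explicit form \eqref{eq:MALAg}, or prove the $\limsup$ version of the converse, which suffices for every application of Lemma~\ref{lem:equivalentForIntA} made in the paper.
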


\begin{proof}
Given the choice of target \eqref{eq:MALAtarget} and the corresponding MALA proposal \eqref{eq:MALAproposal}, the density \eqref{eq:acceptanceDensity} of the acceptance part of the Markov transition kernel $K(x,dy)$ becomes

    \begin{align*}
        a(x,y)=\min&\Bigg\{1,e^{-\gamma(|y|^\beta-|x|^\beta)-\frac{1}{2\varepsilon}\left(\left\lvert x-y+\frac{\varepsilon\gamma\beta}{2}|y|^{\beta-2}y\right\rvert^2-\left\lvert y-x+\frac{\varepsilon\gamma\beta}{2}|x|^{\beta-2}x\right\rvert^2\right)}\Bigg\}Ce^{-\frac{1}{2\varepsilon}\left\lvert y-x+\frac{\varepsilon\gamma\beta}{2}|x|^{\beta-2}x\right\rvert^2}.
    \end{align*}

     The term coming from the ratio $J(x|y)/J(y|x)$ can be rewritten as
    \begin{align*}
        &\left\lvert x-y+\frac{\varepsilon\gamma\beta}{2}|y|^{\beta-2}y\right\rvert^2-\left\lvert y-x+\frac{\varepsilon\gamma\beta}{2}|x|^{\beta-2}x\right\rvert^2\\
        &=|x-y|^2+\left(\frac{\varepsilon\gamma\beta}{2}\right)^2|y|^{2\beta-2}+\varepsilon\gamma\beta|y|^{\beta-2}(\langle x, y\rangle-|y|^2)\\
        &\quad-|y-x|^2-\left(\frac{\varepsilon\gamma\beta}{2}\right)^2|x|^{2\beta-2}-\varepsilon\gamma\beta|x|^{\beta-2}(\langle x, y\rangle-|x|^2)\\
        &=\left(\frac{\varepsilon\gamma\beta}{2}\right)^2\left(|y|^{2\beta-2}-|x|^{2\beta-2}\right)-\varepsilon\gamma\beta\left(|y|^\beta-|x|^{\beta}\right)+\varepsilon\gamma\beta\left(|y|^{\beta-2}-|x|^{\beta-2}\right)\langle x, y\rangle.
    \end{align*}
    Thus,
    \begin{align*}
        a(x,y)=\min\Bigg\{1,\exp\Bigg\{&-\gamma\left(1-\frac{\beta}{2}\right)(|y|^\beta-|x|^\beta)-\frac{\varepsilon}{8}(\gamma\beta)^2\left(|y|^{2\beta-2}-|x|^{2\beta-2}\right)\\
        &-\frac{\gamma\beta}{2}\left(|y|^{\beta-2}-|x|^{\beta-2}\right)\langle x, y\rangle\Bigg\}\Bigg\}
        \cdot C\exp\left\{-\frac{1}{2\varepsilon}\left\lvert y-x+\frac{\varepsilon\gamma\beta}{2}|x|^{\beta-2}x\right\rvert^2\right\}.
    \end{align*}

    Applying the change of variables
        $t=y-x+\frac{\varepsilon\gamma\beta}{2}|x|^{\beta-2}x$,
we obtain
 \begin{align}
 \label{eq:intaWithJhat}
 \begin{split}        \int_Sa(x,y)dy&=\int_S\min\Bigg\{1,\exp\Bigg\{-\gamma\left(1-\frac{\beta}{2}\right)(|y(x,t)|^\beta-|x|^\beta)\\
        &\quad-\frac{\varepsilon}{8}(\gamma\beta)^2\left(|y(x,t)|^{2\beta-2}-|x|^{2\beta-2}\right)-\frac{\gamma\beta}{2}\left(|y(x,t)|^{\beta-2}-|x|^{\beta-2}\right)\langle x, y(x,t)\rangle\Bigg\}\Bigg\}\, \hat J(t)dt,\\
        &=\int_S\min\left\{1,\exp\left\{g(x,t)\right\}\right\} \hat J(t)dt,
 \end{split}
    \end{align}
    where $y(x,t)$ is given by \eqref{eq:yChangeVar}, and $\hat J(t)=C\exp\left\{-\frac{1}{2\varepsilon}|t|^2\right\}$ is a probability density. By dominated convergence, the necessary condition \eqref{eq:intAto1} is satisfied if and only if 
    \begin{equation*}
        \lim_{|x|\to\infty}\min\left\{1,\exp\left\{g(x,t)\right\}\right\}=1,
    \end{equation*}
    which is equivalent to \eqref{MALAprooflimitge0}.
\end{proof}

To study the limit of $g(x,t)$, as $|x|\to\infty$, for different values of $\beta$, we analyse the behaviour of $|y(x,t)|^\alpha-|x|^\alpha$ as $|x|\to\infty$ with $\alpha=\beta,\,2\beta-2$, and $\beta-2$. For this purpose, observe that, from the definition \eqref{eq:yChangeVar} of $y(x,t)$,
\begin{align}
\label{eq:y2}
\begin{split}
    |y(x,t)|^2&=|t|^2+\left(1-\frac{\varepsilon\gamma\beta}{2}|x|^{\beta-2}\right)^2|x|^2+ 2\left(1-\frac{\varepsilon\gamma\beta}{2}|x|^{\beta-2}\right)\langle t, x\rangle\\
       &=|x|^2\Bigg(1-\varepsilon\gamma\beta|x|^{\beta-2}+\left(\frac{\varepsilon\gamma\beta}{2}\right)^2|x|^{2(\beta-2)}+\frac{|t|^2}{|x|^2}+2\left\langle\frac{t}{|x|},\frac{x}{|x|}\right\rangle-\varepsilon\gamma\beta|x|^{\beta-3}\left\langle t, \frac{x}{|x|}\right\rangle\Bigg).
\end{split}
\end{align}
It will also be useful to consider the Taylor expansion, for $|s|<1$,
 \begin{equation}
   \label{eq:Taylor}
       (1+s)^{\alpha}=1+\alpha s+o(s).
   \end{equation}

\begin{lemma}
    \label{lem:beta0to1}
        Let $0<\beta<1$. Then, \eqref{eq:intAto1} holds. However, there does not exist a function $U:\mathbb{R}^d\to[0,+\infty)$ such that \eqref{eq:intexpUato0} holds.
\end{lemma}

\begin{proof}
We will use Lemma~\ref{lem:equivalentForIntA} to show that \eqref{eq:intAto1} holds. For $0<\beta<1$, as $|x|\to\infty$, \eqref{eq:y2} behaves as
    \begin{equation*}
        |y(x,t)|^2=|x|^2\left(1+2\left\langle \frac{t}{|x|},\frac{x}{|x|}\right\rangle+o(|x|^{-1})\right).
    \end{equation*}
    Using the Taylor expansion \eqref{eq:Taylor} with $s=2\langle t ,x\rangle/|x|^2+o(|x|^{-1})$, then for $\alpha=\beta/2$ we obtain
    \begin{equation*}
        |y(x,t)|^\beta=|x|^\beta\left(1+\beta \left\langle\frac{t}{|x|},\frac{x}{|x|}\right\rangle+o(|x|^{-1})\right),
    \end{equation*}
    and for $\alpha=(\beta-2)/2$,
    \begin{equation*}
        |y(x,t)|^{\beta-2}=|x|^{\beta-2}\left(1+(\beta-2) \left\langle\frac{t}{|x|},\frac{x}{|x|}\right\rangle+o(|x|^{-1})\right).
    \end{equation*}
    Note that the term $-\frac{\varepsilon}{8}(\gamma\beta)^2\left(|y(x,t)|^{2\beta-2}-|x|^{2\beta-2}\right)$ in \eqref{eq:MALAg} is negligible as $|x|\to\infty$ for the values of $\beta$ considered here. Recalling the inner product \eqref{eq:innerPrody}, the limit \eqref{MALAprooflimitge0} in Lemma~\ref{lem:equivalentForIntA} becomes
 \begin{align*}
        &\liminf_{|x|\to\infty}\Bigg[-\gamma\left(1-\frac{\beta}{2}\right)(|y(x,t)|^\beta-|x|^\beta)-\frac{\gamma\beta}{2}\left(|y(x,t)|^{\beta-2}-|x|^{\beta-2}\right)\langle x, y(x,t)\rangle\Bigg]\\
        & \quad =\lim_{|x|\to\infty}\Bigg[-\gamma\left(1-\frac{\beta}{2}\right)\left( \beta |x|^{\beta-1}\left\langle t,\frac{x}{|x|}\right\rangle\right) -\frac{\gamma\beta}{2}\left( (\beta-2) |x|^{\beta-3}\left\langle t, \frac{x}{|x|}\right\rangle\right)\left(|x|^2-\frac{\varepsilon\gamma\beta}{2}|x|^{\beta} + \langle t,x \rangle\right)\Bigg]\\
        & \quad =\lim_{|x|\to\infty}\Bigg[\varepsilon\left(\frac{\gamma\beta}{2}\right)^2(\beta-2)|x|^{2\beta-3}\left\langle t, \frac{x}{|x|}\right\rangle-\frac{\gamma\beta}{2}(\beta-2)|x|^{\beta-2}\left(\left\langle t, \frac{x}{|x|}\right\rangle\right)^2 \Bigg]=0.
        \end{align*}
         Thus, $\lim _{|x| \to \infty} g(x,t) \geq 0$ for almost all $t\in\mathbb{R}^d$, and by Lemma~\ref{lem:equivalentForIntA} we have $\lim_{|x|\to\infty}\int_Sa(x,y)dy=1$.

We now proceed by proving by contradiction that a function $U$ that satisfies \eqref{eq:intexpUato0} cannot exist. Assume that $U:S\to [0,\infty)$ satisfies \eqref{eq:intexpUato0}. By applying the change of variable $w=y-x$ this is equivalent to
\begin{align}
    \label{eq:MALAlimBetalessthan1}
    \begin{split}
        0&=\lim_{|x|\to\infty}\int_Se^{U(x+w)-U(x)}a(x,x+w)dw=\lim_{|x|\to\infty}e^{-U(x)}\int_Se^{U(x+w)}\varpi(x,x+w)J(x+w|x)dw,
    \end{split}        
    \end{align}
    where
      \begin{align*}
       J(x+w|x)&=Ce^{-\frac{1}{2\varepsilon}\left\lvert w+\frac{\varepsilon\gamma\beta}{2}|x|^{\beta-2}x\right\rvert^2}=Ce^{-\frac{|w|^2}{2\varepsilon}-\frac{\gamma\beta}{2}|x|^{\beta-1}\left\langle w, \frac{x}{|x|}\right\rangle-\frac{\varepsilon\gamma^2\beta^2}{8}|x|^{2\beta-2}}.
   \end{align*}

   Note that for a fixed $w\in S$, as $|x|\to \infty$, 
   \begin{equation}
   \label{eq:MALAlimJ}
       \lim_{|x|\to\infty}J(x+w|x)=Ce^{-\frac{|w|^2}{2\varepsilon}}.
   \end{equation}

    Fix a direction $v\in S$, with $|v|=1$, and let $x=\rho v$. Then, if \eqref{eq:MALAlimBetalessthan1} holds, we have
    \begin{equation}
    \label{eq:MALAradialLim}
        \lim_{\rho\to+\infty}e^{-U(\rho v)}\int_Se^{U(\rho v+w)}\varpi(\rho v,\rho v+w)J(\rho v+w|\rho v)dw=0.
    \end{equation}

The following type of construction, and calculations following it, will be used multiple times in the paper. Define the terms in the sequence $\{U_n\}_{n\in\mathbb{N}}\subset [0,\infty)$ according to $U_n = \inf_{\rho \ge n} U(\rho v)$. For a fixed $\varepsilon>0$, for every $n\in\mathbb{N}$ there exists $\rho_n\ge n$ such that 
\begin{equation}
\label{eq:MALArhoclosetoinf}
    0 \le U(\rho_n v) < U_n+\varepsilon.
\end{equation}
We define a sequence $\{\rho_n\}_{n\in\mathbb{N}}\subset[1,\infty)$ with terms $\rho_n$ that satisfy \eqref{eq:MALArhoclosetoinf} for each $n$. By construction, $\lim_{n\to\infty}\rho_n=+\infty$, therefore  \eqref{eq:MALAradialLim} implies
\begin{equation}
\label{eq:MALAlimrhon}
    \lim_{n\to\infty}e^{-U(\rho_n v)}\int_Se^{U(\rho_n v+w)}\varpi(\rho_n v,\rho_n v+w)J(\rho_n v+w|\rho_n v)dw=0.
\end{equation}
Define the set
\begin{equation}
\label{MALA:defH}
    H=\{w\,:\,\langle w,v\rangle \ge 0\}\subset S.
\end{equation}
Observe that if $w\in H$, then for each $n$,
\begin{equation*}
    |\rho_n v+w|=\sqrt{\rho_n^2+|w|^2+2\rho_n\langle w,v\rangle }\ge |\rho_n|\ge n 
\end{equation*}
and therefore, from the definition of $U_n$,
\begin{equation}
\label{eq:MALAineqUn}
    U(\rho_n v+w)\ge U_n.
\end{equation}

Recall that $\varpi(x,y)\le1$ for all $x,y\in S$. Then
\begin{align}
\label{MALA:beta1:varpiLeJ}
    \int_H\varpi(\rho_nv,\rho_nv+w)J(\rho_nv+w|\rho_nv)dw\le \int_HJ(\rho_nv+w|\rho_nv)dw.
\end{align}

Observe that the sequence of measurable functions $\{f_n\}_{n\in\mathbb{N}}$ defined as $f_n(w)=J(\rho_nv+w|\rho_nv)\cdot I\{w\in H\}$ satisfies
\begin{align*}
    |f_n(w)|&=Ce^{-\frac{1}{2\varepsilon}\left\lvert w+\frac{\varepsilon\gamma\beta}{2}\rho_n^{\beta-1}v\right\rvert^2}\cdot I\{w\in H\}=Ce^{-\frac{1}{2\varepsilon}\left( |w|^2+\varepsilon\gamma\beta\rho_n^{\beta-1}\langle w,v\rangle + \left(\frac{\varepsilon\gamma\beta}{2}\right)^2\rho_n^{2\beta-2}\right)}\cdot I\{w\in H\}\\
    &\le Ce^{-\frac{|w|^2}{2\varepsilon}}\cdot I\{w\in H\},
\end{align*}
because, by definition, $|v|=1$, $\rho_n\ge1$, and $\langle w,v\rangle\ge0$ for all $w\in H$. Note that the function $w\mapsto Ce^{-\frac{|w|^2}{2\varepsilon}}\cdot I\{w\in H\}$ is integrable and, in particular, 
\begin{align*}
    \int_Sf_n(w)dw\le\int_HCe^{-\frac{|w|^2}{2\varepsilon}}=\frac{1}{2},
\end{align*}
since $w\mapsto Ce^{-\frac{|w|^2}{2\varepsilon}}$ is the density of a centered $d$-dimensional Gaussian distribution on $S$, and $H$ is a half-space in $\mathbb{R}^d$ defined via a hyperplane ($\{w\,:\,\langle w,v\rangle =0\}$) passing through the origin.
Besides, by \eqref{eq:MALAlimJ}, the sequence of functions $\{f_n\}$ converges pointwise to $Ce^{-\frac{|w|^2}{2\varepsilon}}$. By the dominated convergence theorem, it follows that
\begin{align*}
\lim_{n\to\infty}\int_Sf_n(w)dw&=\int_S\lim_{n\to\infty}f_n(w)dw=\int_S Ce^{-\frac{|w|^2}{2\varepsilon}}\cdot I\{w\in H\}dw =\frac{1}{2},
\end{align*}
that is,
\begin{align}
\label{MALA:beta1:limJ}
    \lim_{n\to\infty}\int_HJ(\rho_nv+w|\rho_nv)dw=\frac{1}{2}.
\end{align}

Equations \eqref{MALA:beta1:varpiLeJ} and \eqref{MALA:beta1:limJ} combined imply
\begin{equation}
    \label{ineq_le12}
    \limsup_{n\to\infty}\int_H\varpi(\rho_nv,\rho_nv+w)J(\rho_nv+w|\rho_nv)dw\le \frac{1}{2}.
\end{equation}
In the other direction, using $\lim _{|x| \to \infty} \int _S a (x,y) dy=1$, we obtain the asymptotic lower bound
\begin{align*}
    &\liminf_{n\to\infty}\int_H\varpi(\rho_nv,\rho_nv+w)J(\rho_nv+w|\rho_nv)dw= \liminf_{n\to\infty}\int_H a(\rho_nv,\rho_nv+w)dw\\
    &\qquad=\liminf_{n\to\infty}\left(\int_Sa(\rho_nv,\rho_nv+w)dw-\int_{S\setminus H}a(\rho_nv,\rho_nv+w)dw\right)\\
    & \qquad \ge \liminf_{n\to\infty}\int_Sa(\rho_nv,\rho_nv+w)dw + \liminf_{n\to\infty}\left(-\int_{S\setminus H}a(\rho_nv,\rho_nv+w)dw\right)\\
    &\qquad=1-\limsup\int_{S\setminus H}\varpi(\rho_nv,\rho_nv+w)J(\rho_nv+w|\rho_nv)dw\\
    &\qquad \ge1-\limsup_{n\to\infty}\int_{S\setminus H}J(\rho_nv+w|\rho_nv)dw=\liminf_{n\to\infty}\int_{H}J(\rho_nv+w|\rho_nv)dw=\frac{1}{2},
\end{align*}
where we used Equation \eqref{MALA:beta1:limJ}. Therefore,
\begin{align*}
    \liminf_{n\to\infty}\int_H\varpi(\rho_nv,\rho_nv+w)J(\rho_nv+w|\rho_nv)dw \ge \frac{1}{2},
\end{align*}
and recalling \eqref{ineq_le12} we obtain that 
\begin{equation}
\label{eq:MALAlimVarpi}
\lim_{n\to\infty}\int_H\varpi(\rho_nv,\rho_nv+w)J(\rho_nv+w|\rho_nv)dw=\frac{1}{2}.
\end{equation}

Thus, from \eqref{eq:MALArhoclosetoinf}, \eqref{eq:MALAineqUn} and \eqref{eq:MALAlimVarpi}, it follows that
\begin{align*}
    &\lim_{n\to\infty}\int_Se^{U(\rho_n v+w)-U(\rho_n v)}\varpi(\rho_n v,\rho_n v+w)J(\rho_n v+w|\rho_n v)dw\\
    & \qquad \ge \lim_{n\to\infty}e^{-U_n-\varepsilon}\int_Se^{U(\rho_n v+w)}\varpi(\rho_n v,\rho_n v+w)J(\rho_n v+w|\rho_n v)dw\\
    & \qquad \ge\lim_{n\to\infty}e^{-U_n-\varepsilon}\int_He^{U(\rho_n v+w)}\varpi(\rho_n v,\rho_n v+w)J(\rho_n v+w|\rho_n v)dw\\
    & \qquad \ge\lim_{n\to\infty}e^{-U_n-\varepsilon}e^{U_n}\int_H\varpi(\rho_n v,\rho_n v+w)J(\rho_n v+w|\rho_n v)dw\\
    & \qquad =e^{-\varepsilon}\lim_{n\to\infty}\int_H\varpi(\rho_n v,\rho_n v+w)J(\rho_n v+w|\rho_n v)dw=\frac{e^{-\varepsilon}}{2}>0.
\end{align*}
This contradicts \eqref{eq:MALAlimrhon}. Using Lemma \ref{lem:rto0} we conclude that a function $U$ satisfying \eqref{eq:intexpUato0} cannot exist.
\end{proof}

\begin{lemma}
\label{lem:beta1}
    Let $\beta=1$.
    Then, \eqref{eq:intAto1} holds. However, there does not exist a function $U:\mathbb{R}^d\to[0,+\infty)$ such that \eqref{eq:intexpUato0} holds.
\end{lemma}
\begin{proof}
    When $\beta=1$, $g(x,t)$, as defined in \eqref{eq:MALAg}, becomes
 \begin{equation}
 \label{eq:MALAlimbeta1}
        g(x,t)=-\frac{\gamma}{2}(|y(x,t)|-|x|)-\frac{\gamma}{2}\left(|y(x,t)|^{-1}-|x|^{-1}\right)\langle x, y(x,t)\rangle.
        \end{equation}
Moreover, from \eqref{eq:y2}, as $|x|\to\infty$ and with $\beta = 1$, $|y(x,t)|^2$ behaves as
    \begin{equation*}
        |y(x,t)|^2=|x|^2\left(1+|x|^{-1}\left(-\varepsilon\gamma+2\left\langle t,\frac{x}{|x|}\right\rangle\right)+o(|x|^{-1})\right).
    \end{equation*}
    Using the Taylor expansion \eqref{eq:Taylor} with $s=|x|^{-1}\left(-\varepsilon\gamma+2\langle t, x\rangle/|x|\right)+o(|x|^{-1})$, for $\alpha=1/2$ we have
    \begin{equation*}
        |y(x,t)|=|x|\left(1+\frac{1}{2} |x|^{-1} \left(-\varepsilon\gamma+2\left\langle t,\frac{x}{|x|}\right\rangle\right)+o(|x|^{-1})\right),
    \end{equation*}s
    and for $\alpha=-1/2$,
      \begin{equation*}
        |y(x,t)|^{-1}=|x|^{-1}\left(1-\frac{1}{2} |x|^{-1} \left(-\varepsilon\gamma+2\left\langle t,\frac{x}{|x|}\right\rangle\right)+o(|x|^{-1})\right).
    \end{equation*}
    Moreover, recalling the inner product \eqref{eq:innerPrody}, the limit in \eqref{MALAprooflimitge0} becomes
    \begin{align*}
    \label{eq:MALAlimBeta1}
         &\liminf_{|x|\to\infty}\Bigg[-\frac{\gamma}{2}\left(-\frac{\varepsilon\gamma}{2}+\left\langle t, \frac{x}{|x|}\right\rangle\right)-\frac{\gamma}{2}|x|^{-2}\left(\frac{\varepsilon\gamma}{2}-\left\langle t, \frac{x}{|x|}\right\rangle\right)\left(|x|^2+\langle t, x \rangle -\frac{\varepsilon\gamma}{2}|x|\right)\Bigg]\\
         & \quad =\lim_{|x|\to\infty}\frac{\gamma}{2}|x|^{-1}\left(\frac{\varepsilon\gamma}{2}-\left\langle t, \frac{x}{|x|}\right\rangle\right)^2 =0.
    \end{align*}
    By Lemma~\ref{lem:equivalentForIntA}, this implies that $\lim_{|x|\to\infty}\int_S a(x,y)dy=1$.

To show that there cannot exist a function $U$ satisfying \eqref{eq:intexpUato0}, we now follow the same strategy as in Lemma \ref{lem:beta0to1}. Suppose that there is such a function. With the change of variable $w=y-x$, the proposal density takes the form
\begin{equation*}
       J(x+w|x)=C\exp\left\{-\frac{1}{2\varepsilon}\left\lvert w+\frac{\varepsilon\gamma}{2}\frac{x}{|x|}\right\rvert^2\right\}.
   \end{equation*}
Because the proposal depends on $x/|x|$, fix a direction $v \in S$, with $|v|=1$ and consider $x = \rho v$, $\rho \in \mathbb{R}$. Then,
\begin{equation*}
    \lim_{\rho\to\infty}J(\rho v+w|\rho v)=C\exp\left\{-\frac{1}{2\varepsilon}\left\lvert w+\frac{\varepsilon\gamma}{2}v\right\rvert^2\right\}.
\end{equation*}
Similar to Lemma \ref{lem:beta0to1}, define the sequence $\{ U_n \} _{n \in \mathbb{N}}$ according to $U_n = \inf_{\rho \geq n} U(\rho v)$. For a fixed $\varepsilon >0$, we can extract a sequence $\{ \rho _n\} _{n \in \mathbb{N}}$ such that $\rho _n \geq n$ and $0 \leq U(\rho_n v) < U_n + \varepsilon$. Then $\rho _n \to +\infty$ as $n \to \infty$, and by the assumption on $U$,
\begin{equation}
\label{eq:limU_beta1}
    \lim _{n \to \infty} e^{-U(\rho _n v)} \int _S e^{U(\rho_n v + w)} \varpi (\rho_n v, \varpi _n v + w) J (\rho_n v +w | \rho _n v) dw =0,
\end{equation}
with $\varpi$ and $J$ associated with the MALA density with $\beta =1$.

We will now use an argument analogous to that used in Lemma~and \ref{lem:beta0to1}. Define $H$ as in \eqref{MALA:defH} and, for a fixed direction $v$, define the sequence of measurable functions $\{f_n\}$ as
\begin{align*}
    f_n(w)=J(\rho_nv+w|\rho_nv)\cdot I\{w\in H\}.
\end{align*}
Note that
\begin{align*}
    f_n(w)=Ce^{-\frac{1}{2\varepsilon}\left\lvert w+\frac{\varepsilon\gamma}{2}v\right\rvert^2}\cdot I\{w\in H\},
\end{align*}
where $w\mapsto Ce^{-\frac{1}{2\varepsilon}\left\lvert w+\frac{\varepsilon\gamma}{2}v\right\rvert^2}$ is the density of a $d$-dimensional Gaussian distribution centered in $-\frac{\varepsilon\gamma}{2}v$, which is $\neq0$. Because $H$ is the hyper-space defined via a hyperplane passing through the origin,
\begin{align*}
    \int_Sf_n(w)=\int_HCe^{-\frac{1}{2\varepsilon}\left\lvert w+\frac{\varepsilon\gamma}{2}v\right\rvert^2}dw=c\in (0,1).
\end{align*}
Recalling that $0\le\varpi(x,y)\le 1$ for all $x,y\in S$, it follows that
\begin{equation}
\label{ineq_le_c}
\begin{split}
    \limsup_{n\to\infty}\int_H\varpi(\rho_nv,\rho_nv+w)J(\rho_nv+w|\rho_nv)dw&\le \lim_{n\to\infty}\int_HJ(\rho_nv+w|\rho_nv)dw\\
    &=\lim_{n\to\infty}\int_Sf_n(w)dw=\lim_{n\to\infty}c=c.
\end{split}
\end{equation}
Besides, since $1=\lim_{|x|\to\infty}\int_S a(x,y)dy=\lim_{|x|\to\infty}\int_S \varpi(x,y)J(y|x)dy$,
\begin{align*}
    &\liminf_{n\to\infty}\int_H\varpi(\rho_nv,\rho_nv+w)J(\rho_nv+w|\rho_nv)dw=\liminf_{n\to\infty}\int_H a(\rho_nv,\rho_nv+w)dw\\
    &\qquad=\liminf_{n\to\infty}\left(\int_S a(\rho_nv,\rho_nv+w)dw-\int_{S\setminus H} a(\rho_nv,\rho_nv+w)dw\right)\\
    &\qquad \ge \liminf_{n\to\infty}\int_S a(\rho_nv,\rho_nv+w)dw+\liminf_{n\to\infty}\left(-\int_{S\setminus H} a(\rho_nv,\rho_nv+w)dw\right)\\
    &\qquad=1-\limsup_{n\to\infty}\int_{S\setminus H}\varpi(\rho_nv,\rho_nv+w)J(\rho_nv+w|\rho_nv)dw\\
    &\qquad\ge 1-\limsup_{n\to\infty}\int_{S\setminus H}J(\rho_nv+w|\rho_nv)dw=\lim_{n\to\infty}\int_HJ(\rho_nv+w|\rho_nv)dw=c,
\end{align*}
which, together with \eqref{ineq_le_c}, leads to
\begin{align*}
    \lim_{n\to\infty}\int_H\varpi(\rho_nv,\rho_nv+w)J(\rho_nv+w|\rho_nv)dw  =c.
\end{align*}

It follows that
\begin{align*}
    & \lim_{n\to\infty}\int_Se^{U(\rho_n v+w)-U(\rho_n v)}\varpi(\rho_n v,\rho_n v+w)J(\rho_n v+w|\rho_n v)dw\\
    & \qquad \ge \lim_{n\to\infty}e^{-U_n-\varepsilon}\int_Se^{U(\rho_n v+w)}\varpi(\rho_n v,\rho_n v+w)J(\rho_n v+w|\rho_n v)dw\\
    & \qquad \ge\lim_{n\to\infty}e^{-U_n-\varepsilon}\int_He^{U(\rho_n v+w)}\varpi(\rho_n v,\rho_n v+w)J(\rho_n v+w|\rho_n v)dw\\
    & \qquad \ge\lim_{n\to\infty}e^{-U_n-\varepsilon}e^{U_n}\int_H\varpi(\rho_n v,\rho_n v+w)J(\rho_n v+w|\rho_n v)dw\\
    & \qquad =e^{-\varepsilon}\lim_{n\to\infty}\int_H\varpi(\rho_n v,\rho_n v+w)J(\rho_n v+w|\rho_n v)dw=ce^{-\varepsilon}.
\end{align*}
Since $c e^{-\varepsilon} >0$, this contradicts the assumption that $U$ satisfies \eqref{eq:limU_beta1}. By extension, $U$ cannot satisfy \eqref{eq:intexpUato0}, which completes the proof.

\end{proof}

\begin{lemma}
\label{lem:beta1to2}
    Let $1<\beta<2$. Then, $\lim_{|x|\to\infty}\int_Sa(x,y)dy=1$.
\end{lemma}

\begin{proof}
    To prove the claim we will once again rely on Lemma~\ref{lem:equivalentForIntA}. From the definition of $y(x,t)$ and \eqref{eq:y2}, when $1<\beta<2$ and as $|x|\to\infty$,
    \begin{equation*}
        |y(x,t)|^2=|x|^2\left(1-\varepsilon\gamma\beta|x|^{\beta-2}+\left(\frac{\varepsilon\gamma\beta}{2}\right)^2|x|^{2\beta-4}+2\left\langle t,\frac{x}{|x|}\right\rangle |x|^{-1}-\varepsilon\gamma\beta|x|^{\beta-3}+o\left(|x|^{\beta-3}\right)\right).
    \end{equation*}
   Using the Taylor expansion \eqref{eq:Taylor} with $s=-\varepsilon\gamma\beta|x|^{\beta-2}+\left(\frac{\varepsilon\gamma\beta}{2}\right)^2|x|^{2\beta-4}+2\left\langle t,\frac{x}{|x|}\right\rangle |x|^{-1}$,
    we obtain that for $\alpha=\beta/2$,
    \begin{equation*}
        |y(x,t)|^\beta=|x|^\beta\left(1-\frac{\varepsilon\gamma\beta^2}{2}|x|^{\beta-2}+\frac{\varepsilon^2\gamma^2\beta^3}{8}|x|^{2\beta-4}+\beta\left\langle t,\frac{x}{|x|}\right\rangle |x|^{-1}-\frac{\varepsilon\gamma\beta^2}{2}|x|^{\beta-3}+o(|x|^{\beta-3})\right),
    \end{equation*}
    for $\alpha=\beta-1$,
    \begin{equation*}
        \begin{split}
        |y(x,t)|^{2\beta-2}=|x|^{2\beta-2}\Bigg(1-\varepsilon\gamma\beta(\beta-1)&|x|^{\beta-2}+\frac{\varepsilon^2\gamma^2\beta^2}{4}(\beta-1)|x|^{2\beta-4}\\
        &+2(\beta-1)\left\langle t,\frac{x}{|x|}\right\rangle |x|^{-1}-\varepsilon\gamma\beta(\beta-1)|x|^{\beta-3}+o(|x|^{\beta-3})\Bigg),
        \end{split}
    \end{equation*}
    and for $\alpha=(\beta-2)/2$,
    \begin{equation*}
        \begin{split}
        |y(x,t)|^{\beta-2}=|x|^{\beta-2}\Bigg(1-\frac{\varepsilon\gamma\beta}{2}(\beta-2)|x|^{\beta-2}&+\frac{\varepsilon^2\gamma^2\beta^2}{8}(\beta-2)|x|^{2\beta-4}\\
        &+(\beta-2)\left\langle t,\frac{x}{|x|}\right\rangle |x|^{-1}-\frac{\varepsilon\gamma\beta}{2}(\beta-2)|x|^{\beta-3}+o(|x|^{\beta-3})\Bigg).
        \end{split}
    \end{equation*}
    As a consequence, we obtain the following:
    \begin{equation*}
        \begin{split}
            |y(x,y)|^\beta-|x|^\beta&=-\frac{\varepsilon\gamma\beta^2}{2}|x|^{2\beta-2}+\frac{\varepsilon^2\gamma^2\beta^3}{8}|x|^{3\beta-4}\\
            &\qquad+\beta\left\langle t,\frac{x}{|x|}\right\rangle |x|^{\beta-1}-\frac{\varepsilon\gamma\beta^2}{2}|x|^{2\beta-3}+o(|x|^{2\beta-3})\\
            |y(x,y)|^{2\beta-2}-|x|^{2\beta-2}&=-\varepsilon\gamma\beta(\beta-1)|x|^{3\beta-4}+\frac{\varepsilon^2\gamma^2\beta^2}{4}(\beta-1)|x|^{4\beta-6}\\
            &\qquad +2(\beta-1)\left\langle t,\frac{x}{|x|}\right\rangle |x|^{2\beta-3}-\varepsilon\gamma\beta(\beta-1)|x|^{3\beta-5}+o(|x|^{3\beta-5})\\
            |y(x,y)|^{\beta-2}-|x|^{\beta-2}&=-\frac{\varepsilon\gamma\beta}{2}(\beta-2)|x|^{2\beta-4}+\frac{\varepsilon^2\gamma^2\beta^2}{8}(\beta-2)|x|^{3\beta-6}\\
        &\qquad +(\beta-2)\left\langle t,\frac{x}{|x|}\right\rangle |x|^{\beta-3}-\frac{\varepsilon\gamma\beta}{2}(\beta-2)|x|^{2\beta-5}+o(|x|^{2\beta-5})
        \end{split}
    \end{equation*}
    
    Using \eqref{eq:innerPrody} for the inner product $\langle x, y(x,t) \rangle$, $g(x,t)$ becomes
    \begin{align*}
        g(x,t)&=-\gamma\left(1-\frac{\beta}{2}\right)(|y(x,t)|^\beta-|x|^\beta)-\frac{\varepsilon}{8}(\gamma\beta)^2\left(|y(x,t)|^{2\beta-2}-|x|^{2\beta-2}\right)\\
        &\qquad-\frac{\gamma\beta}{2}\left(|y(x,t)|^{\beta-2}-|x|^{\beta-2}\right)\langle x, y(x,t)\rangle\\
        &=\frac{\gamma}{2}(\beta-2)\left(-\frac{\varepsilon\gamma\beta^2}{2}|x|^{2\beta-2}+\frac{\varepsilon^2\gamma^2\beta^3}{8}|x|^{3\beta-4}+\beta\left\langle t,\frac{x}{|x|}\right\rangle |x|^{\beta-1}-\frac{\varepsilon\gamma\beta^2}{2}|x|^{2\beta-3}+o(|x|^{2\beta-3})\right)\\
        &\qquad -\frac{\varepsilon}{8}(\gamma\beta)^2\Bigg(-\varepsilon\gamma\beta(\beta-1)|x|^{3\beta-4}+\frac{\varepsilon^2\gamma^2\beta^2}{4}(\beta-1)|x|^{4\beta-6}\\
        &\qquad\qquad\qquad\qquad  +2(\beta-1)\left\langle t,\frac{x}{|x|}\right\rangle |x|^{2\beta-3}-\varepsilon\gamma\beta(\beta-1)|x|^{3\beta-5}+o(|x|^{3\beta-5})\Bigg)\\
        &\qquad-\frac{\gamma\beta}{2}\Bigg(-\frac{\varepsilon\gamma\beta}{2}(\beta-2)|x|^{2\beta-4}+\frac{\varepsilon^2\gamma^2\beta^2}{8}(\beta-2)|x|^{3\beta-6}\\
        &\qquad\qquad\qquad\qquad +(\beta-2)\left\langle t,\frac{x}{|x|}\right\rangle |x|^{\beta-3}-\frac{\varepsilon\gamma\beta}{2}(\beta-2)|x|^{2\beta-5}+o(|x|^{2\beta-5})\Bigg)\\
        &\qquad\qquad\qquad\qquad\times\left(|x|^2-\frac{\varepsilon\gamma\beta}{2}|x|^\beta+\left\langle t,\frac{x}{|x|}\right\rangle|x|\right)\\
        &=\Bigg(\cancel{-\frac{\varepsilon\gamma^2\beta^2}{4}(\beta-2)|x|^{2\beta-2}}+\cancel{\frac{\varepsilon^2\gamma^3\beta^3}{16}(\beta-2)|x|^{3\beta-4}}+\cancel{\frac{\gamma\beta}{2}(\beta-2)\left\langle t,\frac{x}{|x|}\right\rangle|x|^{\beta-1}}\\
        &\qquad\qquad\qquad\qquad-\cancel{\frac{\varepsilon\gamma^2\beta^2}{4}(\beta-2)|x|^{2\beta-3}}+o(|x|^{2\beta-3})\Bigg)\\
        &\qquad+\Bigg(\frac{\varepsilon^2\gamma^3\beta^3}{8}(\beta-1)|x|^{3\beta-4}-\frac{\varepsilon^3\gamma^4\beta^4}{32}(\beta-1)|x|^{4\beta-6}\\
        &\qquad\qquad\qquad\qquad-\frac{\varepsilon\gamma^2\beta^2}{4}(\beta-1)\left\langle t,\frac{x}{|x|}\right\rangle|x|^{2\beta-3}+\frac{\varepsilon^2\gamma^3\beta^3}{8}(\beta-1)|x|^{3\beta-5}+o(|x|^{3\beta-5})\Bigg)\\
        &+\Bigg(\cancel{\frac{\varepsilon\gamma^2\beta^2}{4}(\beta-2)|x|^{2\beta-2}}-\cancel{\frac{\varepsilon^2\gamma^3\beta^3}{16}(\beta-2)|x|^{3\beta-4}}\\
        &\qquad\qquad\qquad\qquad\cancel{-\frac{\gamma\beta}{2}(\beta-2)\left\langle t,\frac{x}{|x|}\right\rangle|x|^{\beta-1}}+\cancel{\frac{\varepsilon\gamma^2\beta^2}{4}(\beta-2)|x|^{2\beta-3}}+o(|x|^{2\beta-3})\Bigg)\\
        &\qquad+\left(-\frac{\varepsilon^2\gamma^3\beta^3}{8}(\beta-2)|x|^{3\beta-4}+o(|x|^{3\beta-4})\right)+\left(\frac{\varepsilon\gamma^2\beta^2}{4}(\beta-2)\left\langle t,\frac{x}{|x|}\right\rangle|x|^{2\beta-3}+o(|x|^{2\beta-3})\right)\\
        &=\frac{\varepsilon^2\gamma^3\beta^3}{8}|x|^{3\beta-4}+o(|x|^{3\beta-4}),
        \end{align*}
        since for $\beta\in(1,2)$, $|x|^{2\beta-3}=o(|x|^{3\beta-4})$, $|x|^{4\beta-6}=o(|x|^{3\beta-4})$ and $|x|^{3\beta-5}=o(|x|^{3\beta-4})$.

        Note that $\frac{\varepsilon^2\gamma^3\beta^3}{8}|x|^{3\beta-4}\ge0$, therefore,
        \begin{equation*}
            \liminf_{|x|\to\infty}g(x,t)=\liminf_{|x|\to\infty} \frac{\varepsilon^2\gamma^3\beta^3}{8}|x|^{3\beta-4}\ge0,
        \end{equation*}
        and by Lemma~\ref{lem:equivalentForIntA} we obtain the desired limit.
\end{proof}

\begin{lemma}
\label{lem:beta2}
    Let $\beta=2$. Then, $\lim_{|x|\to\infty}\int_Sa(x,y)dy=1$ if and only if $\varepsilon\gamma<2$.
\end{lemma}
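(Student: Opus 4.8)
The plan is to specialise the function $g(x,t)$ from \eqref{eq:MALAg} to the case $\beta=2$ and then invoke Lemma~\ref{lem:equivalentForIntA}, which reduces the claim to determining the sign of $\liminf_{|x|\to\infty}g(x,t)$ for almost all $t$. The decisive simplification at $\beta=2$ is that two of the three terms in \eqref{eq:MALAg} vanish identically: the prefactor $1-\beta/2$ kills the first term, and since $|x|^{\beta-2}=|y(x,t)|^{\beta-2}=1$ the third term disappears as well. Only the middle term survives, leaving the exact identity
\[
    g(x,t)=-\frac{\varepsilon\gamma^2}{2}\left(|y(x,t)|^2-|x|^2\right).
\]
Moreover, at $\beta=2$ the map $y(x,t)=t+(1-\varepsilon\gamma)x$ from \eqref{eq:yChangeVar} is affine in $x$, so no Taylor expansion is needed and the analysis is entirely exact.

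Next I would compute $|y(x,t)|^2-|x|^2$ directly. Expanding $|t+(1-\varepsilon\gamma)x|^2$ and using the identity $(1-\varepsilon\gamma)^2-1=\varepsilon\gamma(\varepsilon\gamma-2)$ gives
\[
    |y(x,t)|^2-|x|^2=|t|^2+\varepsilon\gamma(\varepsilon\gamma-2)|x|^2+2(1-\varepsilon\gamma)\langle t,x\rangle,
\]
and therefore
\[
    g(x,t)=-\frac{\varepsilon^2\gamma^3}{2}(\varepsilon\gamma-2)|x|^2-\frac{\varepsilon\gamma^2}{2}|t|^2-\varepsilon\gamma^2(1-\varepsilon\gamma)\langle t,x\rangle.
\]
Whenever the leading coefficient $-\tfrac{\varepsilon^2\gamma^3}{2}(\varepsilon\gamma-2)$ is nonzero, the quadratic-in-$|x|$ term dominates the linear term $\langle t,x\rangle$ as $|x|\to\infty$, so the whole question reduces to the sign of $-(\varepsilon\gamma-2)$.

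I would then split into three cases. If $\varepsilon\gamma<2$, the leading coefficient is strictly positive, so $g(x,t)\to+\infty$ for every fixed $t$; hence $\liminf_{|x|\to\infty}g(x,t)=+\infty\ge0$, and Lemma~\ref{lem:equivalentForIntA} yields $\lim_{|x|\to\infty}\int_S a(x,y)\,dy=1$. If $\varepsilon\gamma>2$, the leading coefficient is strictly negative, so $g(x,t)\to-\infty$ and $\liminf_{|x|\to\infty}g(x,t)=-\infty<0$ for every $t$; thus the condition \eqref{MALAprooflimitge0} fails on a full-measure set and, by Lemma~\ref{lem:equivalentForIntA}, the limit cannot equal $1$.

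The only delicate case is the boundary value $\varepsilon\gamma=2$, where the $|x|^2$ term cancels and, using $1-\varepsilon\gamma=-1$, the surviving expression is the linear function $g(x,t)=-\tfrac{\varepsilon\gamma^2}{2}|t|^2+\varepsilon\gamma^2\langle t,x\rangle$. This is the one place requiring care: since $\liminf_{|x|\to\infty}$ ranges over \emph{all} ways of sending $|x|\to\infty$, a single fixed-direction limit is not enough. Instead I would fix any $t\ne0$ and let $x$ run along the anti-parallel direction $x=-R\,t/|t|$, along which $\langle t,x\rangle=-R|t|\to-\infty$; this shows $\inf_{|x|=R}g(x,t)\to-\infty$, whence $\liminf_{|x|\to\infty}g(x,t)=-\infty<0$ for almost every $t$, and Lemma~\ref{lem:equivalentForIntA} again rules out the limit being $1$. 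Combining the three cases gives the stated equivalence, with the boundary case $\varepsilon\gamma=2$ being the only genuine subtlety.
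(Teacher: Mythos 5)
Your specialisation of \eqref{eq:MALAg} at $\beta=2$ is exact and correct, and for $\varepsilon\gamma<2$ and $\varepsilon\gamma>2$ your argument coincides with the paper's: only the middle term of \eqref{eq:MALAg} survives, $y(x,t)=t+(1-\varepsilon\gamma)x$ is affine, and the sign of the quadratic coefficient $-\tfrac{\varepsilon^2\gamma^3}{2}(\varepsilon\gamma-2)$ settles both cases (for $\varepsilon\gamma>2$ the appeal to Lemma~\ref{lem:equivalentForIntA} is harmless, since $g(x,t)\to-\infty$ along every path, so $\min\{1,e^{g}\}\to0$ pointwise and dominated convergence directly gives $\int_S a(x,y)dy\to0\neq1$). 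Where you genuinely diverge from the paper is the boundary case $\varepsilon\gamma=2$: you show $\liminf_{|x|\to\infty}g(x,t)=-\infty$ for every $t\neq0$ by sending $x$ anti-parallel to $t$, and then invoke the \emph{only if} direction of Lemma~\ref{lem:equivalentForIntA}. The paper instead returns to the representation \eqref{eq:intaWithJhat} and computes the limit outright: for $x=\rho v$, $g(\rho v,t)\to-\infty$ on the half-space $\{t:\langle t,v\rangle<0\}$ and $\to+\infty$ on $\{t:\langle t,v\rangle>0\}$, each of $\hat J$-measure $1/2$, so $\int_S a(\rho v,y)dy\to\tfrac12\cdot0+\tfrac12\cdot1=\tfrac12\neq1$ uniformly in $v$. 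The distinction is not cosmetic: the only-if direction of Lemma~\ref{lem:equivalentForIntA} is its delicate half, since the dominated-convergence argument behind that lemma really only covers situations where $\min\{1,e^{g(x,t)}\}$ has a pointwise a.e.\ limit in $t$, which is exactly what fails at $\varepsilon\gamma=2$ (the limit depends on the direction of $x$). In general, having $\liminf_{|x|\to\infty}g(x,t)<0$ a.e.\ through $t$-dependent escape directions is compatible with the integral still tending to $1$ — for instance if the ``bad'' set of $t$ for each large $x$ were a narrow cone whose $\hat J$-measure vanishes. What makes the conclusion true here is that this bad set is asymptotically a half-space of measure $1/2$, which is precisely what the paper's computation exhibits, and which also yields the sharper statement that the limit exists and equals $1/2$. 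So your proof is formally licensed by Lemma~\ref{lem:equivalentForIntA} as stated, but to make the boundary case self-contained (and immune to the weak spot in that lemma's proof) you should replace the appeal to it by the half-space computation, as the paper does.
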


\begin{proof}  
    When $\beta=2$, $|y(x,t)|^2$ becomes (see \eqref{eq:y2})
    \begin{equation*}
        |y(x,t)|^2=|x|^2\left(1-2\varepsilon\gamma+\left(\varepsilon\gamma\right)^2+\frac{|t|^2}{|x|^2}+2(1-\varepsilon\gamma)\left\langle \frac{t}{|x|},\frac{x}{|x|}\right\rangle\right).
    \end{equation*}
    If we restrict to the two cases $\varepsilon \gamma < 2$ and $\varepsilon \gamma >2$, the limit of $g(x,t)$, as defined in \eqref{eq:MALAg}, simplifies to        
    \begin{align*}
        \lim_{|x|\to\infty}-\frac{\varepsilon}{2}\gamma^2(|y(x,t)|^2-|x|^2)&=-\frac{\varepsilon^2\gamma^3}{2}(\varepsilon\gamma-2)\lim_{|x|\to\infty}|x|^2=\begin{cases}
            +\infty,\quad\text{if }\varepsilon\gamma <2,\\
            -\infty,\quad\text{if }\varepsilon\gamma >2.\\
        \end{cases}
    \end{align*}
    Thus, when $\beta=2$ and $\varepsilon\gamma<2$, \eqref{MALAprooflimitge0} holds and, by Lemma~\ref{lem:equivalentForIntA}, $\lim_{|x|\to\infty}\int_Sa(x,y)dy=1$. 
On the other hand, when $\beta=2$ and $\varepsilon\gamma>2$, the limit in \eqref{MALAprooflimitge0} is negative, and the claim again follows from Lemma~\ref{lem:equivalentForIntA}.

    It remains to consider the case $\beta =2$ and $\varepsilon\gamma=2$. With these parameter values,
    \begin{equation*}
       |y(x,t)|^2-|x|^2=|t|^2-2\langle t, x\rangle, 
    \end{equation*}
    and the sign of $\lim_{|x|\to\infty}-\frac{\varepsilon}{2}\gamma^2(|y(x,t)|^2-|x|^2)$ depends on the direction $x/|x|$. Take a direction $v\in\mathbb{R}^d$ with $|v|=1$, set $x=\rho v$ with $\rho\ge0$, and consider the limit in the direction $v$. Let $H_v^-$ and $H_v^+$ be the half-spaces of $\mathbb{R}^d$ defined as $H_v^-=\{t:\langle t, v\rangle <0\}$ and $H_v^+=\{t:\langle t, v\rangle >0\}$. Then,
    \begin{equation}
    \label{lim_g}
        \lim_{\rho\to+\infty}-\frac{\varepsilon}{2}\gamma^2(|y(\rho v,t)|^2-|\rho v|^2)=\lim_{\rho\to\infty}-\frac{\varepsilon}{2}\gamma^2(|t|^2-2\rho \langle t, v\rangle)=\begin{cases}
            -\infty\quad\text{if }t\in H_v^-,\\
             +\infty\quad\text{if }t\in H_v^+.
        \end{cases}
    \end{equation}
    This implies that 
    \begin{align*}
        \liminf_{|x|\to\infty}g(x,t)=\liminf_{|x|\to\infty}-\frac{\varepsilon}{2}\gamma^2(|y(x,t)|^2-|x|^2)=-\infty.
    \end{align*}
    Consequently, by Lemma~\ref{lem:equivalentForIntA}, we conclude that $\lim_{|x|\to\infty}\int_Sa(x,y)dy
\neq1$ when $\beta=2$ and $\varepsilon\gamma=2$, which completes the proof.

On an interesting note (not necessary for the proof), we now show that the average acceptance probability converges to $\frac{1}{2}$, i.e.,
\[\lim_{|x|\to\infty}\int_Sa(x,y)dy=\frac{1}{2}.\] Recall the expression \eqref{eq:intaWithJhat} for $\int _S a (x,y)dy$ and the limit \eqref{lim_g}, and observe that $\int_{H_v^-}\hat J(t)dt=\int_{H_v^+}\hat J(t)dt=1/2$. For the choice $x = \rho v$, this yields 
    \begin{align*}
        &\lim_{\rho\to\infty}\int_Sa(\rho v,y)dy=\lim_{\rho\to\infty}\int_S\min\left\{1,\exp\left\{g(\rho v,t)\right\}\right\} \hat J(t)dt\\
        &=\lim_{\rho\to\infty}\Bigg[\int_{H_v^-}\min\left\{1,\exp\left\{g(\rho v,t)\right\}\right\} \hat J(t)dt+\int_{H_v^+}\min\left\{1,\exp\left\{g(\rho v,t)\right\}\right\} \hat J(t)dt\Bigg]=\frac{1}{2}\cdot 0 + \frac{1}{2}\cdot 1 = \frac{1}{2}.
    \end{align*}
    This limit is independent of the direction $v$, therefore, it follows that $\lim_{|x|\to\infty}\int_Sa(x,y)dy=\frac{1}{2}$.
\end{proof}

We now prove the positive part of Proposition \ref{prop:MALA}: there exists a Lyapunov function satisfying the desired properties when either $\beta \in (1,2)$ or $\beta = 2$ and $\varepsilon \gamma <2$. 
\begin{lemma}
\label{lem:Uforbeta1to2}
    If $\beta=2$ and $\varepsilon\gamma<2$, or $1<\beta<2$ there exists a function $U:\mathbb{R}^d\to[0,+\infty)$ that satisfies Assumption~\ref{ass:compactSpaceOrLyapunov}.
\end{lemma}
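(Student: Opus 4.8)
The plan is to exhibit an explicit Lyapunov function of the same shape as in the IMH analysis, namely $U(x)=c|x|^\beta$ for a constant $c>0$ chosen small, and to verify the three properties of Assumption~\ref{ass:compactSpaceOrLyapunov}. Since Lemmas~\ref{lem:beta1to2} and~\ref{lem:beta2} already establish the necessary condition \eqref{eq:intAto1} in exactly the ranges $1<\beta<2$ and $\beta=2,\ \varepsilon\gamma<2$, Lemma~\ref{lem:rto0} reduces Property~\ref{ass:relCompact} to verifying \eqref{eq:intexpUato0}; Property~\ref{ass:supU} is immediate from the continuity of $U$. So the real work is \eqref{eq:intexpUato0} together with Property~\ref{ass:notNegativeInf}.

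First I would bound the acceptance density by the proposal, $a(x,y)\le J(y|x)$ (using $\varpi\le1$), so that it suffices to control $e^{-c|x|^\beta}\int_S e^{c|y|^\beta}J(y|x)\,dy$. Writing the proposal mean as $\kappa(x)x$ with $\kappa(x)=1-\tfrac{\varepsilon\gamma\beta}{2}|x|^{\beta-2}$, so that $y=\kappa(x)x+t$ under the change of variables of Lemma~\ref{lem:equivalentForIntA}, the central device is the tangent-line inequality for the concave map $s\mapsto s^{\beta/2}$ (concave precisely because $\beta\le2$), evaluated at $s_0=|\kappa(x)x|^2$:
\[
    |y|^\beta \le |\kappa(x)x|^\beta + \frac{\beta}{2}|\kappa(x)x|^{\beta-2}\bigl(|y|^2-|\kappa(x)x|^2\bigr).
\]
Because the right-hand side is quadratic in $y$ (hence in $t$), this turns $\int_S e^{c|y|^\beta}J(y|x)\,dy$ into a genuine Gaussian integral in $t$, valid globally in $t$ and for $|x|$ large enough that $\kappa(x)>0$ and the effective quadratic coefficient $\tfrac{1}{2\varepsilon}-\tfrac{c\beta}{2}|\kappa(x)x|^{\beta-2}$ is positive, which holds as $|x|\to\infty$ since $|\kappa(x)x|^{\beta-2}\to0$ for $\beta<2$, and for $\beta=2$ provided $c<\tfrac{1}{2\varepsilon}$.

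Evaluating the Gaussian and reinstating the prefactor $e^{-c|x|^\beta}$, I expect the exponent to be governed to leading order by $\tfrac{\varepsilon\beta^2}{2}c(c-\gamma)|x|^{2\beta-2}$, using $|\kappa(x)x|^\beta-|x|^\beta=-\tfrac{\varepsilon\gamma\beta^2}{2}|x|^{2\beta-2}+o(|x|^{2\beta-2})$ (from \eqref{eq:y2} and the Taylor expansion \eqref{eq:Taylor}) and $|\kappa(x)x|^{2\beta-2}=|x|^{2\beta-2}(1+o(1))$. Choosing $c\in(0,\gamma)$ small makes this coefficient strictly negative, and since $2\beta-2>0$ the whole expression tends to $0$, giving \eqref{eq:intexpUato0}; for $\beta=2$ the computation is exact (the concavity bound is an equality and $\kappa$ is constant), and one checks directly that the resulting coefficient $c\mu+c^2(1-\varepsilon\gamma)^2/(\tfrac{1}{2\varepsilon}-c)$, with $\mu=\varepsilon\gamma(\varepsilon\gamma-2)<0$, is negative for $c$ small. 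Finally, Property~\ref{ass:notNegativeInf} follows because $x\mapsto\int_S e^{c|y|^\beta-c|x|^\beta}J(y|x)\,dy$ is continuous (the mean $\kappa(x)x$ is continuous for $\beta>1$ and the integral converges for every $x$ as $\beta\le2$ and $c<\tfrac{1}{2\varepsilon}$) and tends to $0$ at infinity, hence is bounded; adding $r(x)\le1$ gives $\sup_x\int_S e^{U(y)-U(x)}K(x,dy)<\infty$, i.e.\ $\inf_x F_U(x)>-\infty$.

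The main obstacle is the regime $1<\beta<2$, where $|y|^\beta$ is not quadratic and no direct Gaussian computation is available; the concavity/tangent-line bound is what linearises the problem, and crucially it holds globally in $y$, so it simultaneously delivers the pointwise decay and a uniform domination. The delicate point in carrying it out is balancing the two competing $|x|^{2\beta-2}$ contributions — the downward pull $c(|\kappa(x)x|^\beta-|x|^\beta)$ from the drift against the upward Gaussian-fluctuation term $\tfrac{c^2\beta^2\varepsilon}{2}|\kappa(x)x|^{2\beta-2}$ — and confirming that their sum has negative leading coefficient exactly when $c$ is taken in $(0,\gamma)$.
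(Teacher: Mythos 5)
Your proposal is correct, and while it shares the paper's overall skeleton — reduce Property~\ref{ass:relCompact} to \eqref{eq:intAto1} and \eqref{eq:intexpUato0} via Lemma~\ref{lem:rto0}, cite Lemmas~\ref{lem:beta1to2} and \ref{lem:beta2} for \eqref{eq:intAto1}, discard the acceptance probability via $a(x,y)\le J(y|x)$, finish with a Gaussian computation, and handle Properties~\ref{ass:notNegativeInf} and \ref{ass:supU} by continuity — it differs in the one place that matters: the choice of Lyapunov function. The paper takes the quadratic $U(x)=|x|^2/(4\varepsilon)$ for the whole range $1<\beta\le2$, whereas you take $U(x)=c|x|^\beta$ with $c\in(0,\gamma)$ small, linearised through the tangent-line bound for the concave map $s\mapsto s^{\beta/2}$. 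This difference is substantive, and your route is in fact the more robust one. With the quadratic choice, the bound $a\le J$ yields exactly
\begin{equation*}
e^{-U(x)}\int_S e^{U(y)}J(y|x)\,dy \;=\; C'\exp\left\{\frac{2\kappa(x)^2-1}{4\varepsilon}\,|x|^2\right\},\qquad \kappa(x)=1-\frac{\varepsilon\gamma\beta}{2}|x|^{\beta-2},
\end{equation*}
and since $\kappa(x)\to1$ when $\beta<2$, this upper bound diverges; the paper's assertion that the inner Gaussian integral ``behaves as $O(e^{|x|})$'' is not accurate (it is of order $e^{\kappa^2|x|^2/\varepsilon}$), so the paper's verification of \eqref{eq:intexpUato0} does not close as written for $1<\beta<2$. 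Your choice avoids exactly this: because $c|y|^\beta$ grows subquadratically for $\beta<2$, discarding the acceptance probability is affordable, and the problem reduces to the competition you identify between the drift gain $c\left(|\kappa(x)x|^\beta-|x|^\beta\right)\sim-\frac{c\varepsilon\gamma\beta^2}{2}|x|^{2\beta-2}$ and the fluctuation cost $\frac{\varepsilon c^2\beta^2}{2}|x|^{2\beta-2}$, resolved by $c<\gamma$; your exact $\beta=2$ computation (requiring $c<\tfrac{1}{2\varepsilon}$ and smallness so that $c\mu+c^2(1-\varepsilon\gamma)^2/(\tfrac{1}{2\varepsilon}-c)<0$ with $\mu=\varepsilon\gamma(\varepsilon\gamma-2)<0$) covers the full range $\varepsilon\gamma<2$, where the paper's bound would only be negative for $\varepsilon\gamma$ in a strict sub-interval. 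In short: the paper's approach buys a single, simple candidate function, but your tail-matched candidate $c|x|^\beta$, $0<c<\gamma$, together with the concavity bound, is what actually makes the crude estimate $a\le J$ sufficient uniformly over $1<\beta\le 2$.
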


\begin{proof}
    We will show that the specific choice $U(x)=|x|$ satisfies Assumption~\ref{ass:compactSpaceOrLyapunov} for the given ranges of $\beta, \varepsilon, \gamma$. To start, we note that by continuity $U(x)$ satisfies Property~\ref{ass:supU}.
    
    Next, we prove that Property~\ref{ass:relCompact} holds for this choice of $U$. By Lemma~\ref{lem:rto0}, this property is equivalent to the limits \eqref{eq:intAto1} and \eqref{eq:intexpUato0}, the first of which was shown to hold for the parameter values considered here in Lemmas \ref{lem:beta1to2}-\ref{lem:beta2}. It therefore remains to show that this choice of $U$ also satisfies \eqref{eq:intexpUato0}.

    Because $a(x,y)\le J(y|x)$,
    \begin{align*}
    \begin{split}
        \int_Se^{U(y)-U(x)}a(x,y)dy&\le\int_Se^{U(y)-U(x)}J(y|x)dy\\
        &  =C\int_S\exp\left\{|y|-|x|-\frac{1}{2\varepsilon}\left\lvert y-\left(1-\frac{\varepsilon\gamma\beta}{2}|x|^{\beta-2}\right)x\right\rvert^2\right\}dy\\
        &=C\int_S\exp\left\{\left\lvert t+\left(1-\frac{\varepsilon\gamma\beta}{2}|x|^{\beta-2}\right)x\right\lvert-|x|-\frac{1}{2\varepsilon}\left\lvert t\right\rvert^2\right\}dt\\
        &\le C\int_S\exp\left\{\left\lvert t\right\rvert+\left\lvert 1-\frac{\varepsilon\gamma\beta}{2}|x|^{\beta-2}\right\rvert \left\lvert x\right\lvert-|x|-\frac{1}{2\varepsilon}\left\lvert t\right\rvert^2\right\}dt\\
            & =C\exp\left\{\left(\left\lvert 1-\frac{\varepsilon\gamma\beta}{2}|x|^{\beta-2}\right\rvert-1\right)|x|\right\}
            \cdot\int_S\exp\left\{-\frac{1}{2\varepsilon}|t|^2+|t|\right\}dt,\\
            &=C\exp\left\{\left(\left\lvert 1-\frac{\varepsilon\gamma\beta}{2}|x|^{\beta-2}\right\rvert-1\right)|x|\right\}\cdot C',
        \end{split}
    \end{align*}
    where on the third line we applied the change of variable $t=y-\left(1-\frac{\varepsilon\gamma\beta}{2}|x|^{\beta-2}\right)x$, the inequality on the fourth line follows by the triangular inequality, and $C'\in\mathbb{R}$ is a finite constant that is independent of $x$. 

    Consider the case $1<\beta<2$. When $|x|\to\infty$, $\frac{\varepsilon\gamma\beta}{2}|x|^{\beta-2}\to0$, therefore,
    \begin{equation*}
        \begin{split}
            0&\le\lim_{|x|\to\infty}\int_Se^{U(y)-U(x)}a(x,y)dy\le \lim_{|x|\to\infty}CC'\exp\left\{\left(\left\lvert 1-\frac{\varepsilon\gamma\beta}{2}|x|^{\beta-2}\right\rvert-1\right)|x|\right\}\\
            &= \lim_{|x|\to\infty}CC'\exp\left\{\left(1-\frac{\varepsilon\gamma\beta}{2}|x|^{\beta-2}-1\right)|x|\right\}=\lim_{|x|\to\infty}CC'\exp\left\{-\frac{\varepsilon\gamma\beta}{2}|x|^{\beta-1}\right\}=0,
        \end{split}
    \end{equation*}
    which proves the limit \eqref{eq:intexpUato0} when $1<\beta<2$.

    When $\beta=2$,
    \begin{equation*}
    \begin{split}
        0\le\lim_{|x|\to\infty}\int_Se^{U(y)-U(x)}a(x,y)dy&\le \lim_{|x|\to\infty}CC'\exp\left\{\left(\left\lvert 1-\frac{\varepsilon\gamma\beta}{2}|x|^{\beta-2}\right\rvert-1\right)|x|\right\}\\
        &=\lim_{|x|\to\infty}CC'\exp\left\{\left(\left\lvert 1-\varepsilon\gamma\right\rvert-1\right)|x|\right\}.
    \end{split}
    \end{equation*}
    The limit \eqref{eq:intexpUato0} is satisfied if and only if the limit in the last display is equal to $0$. This holds if and only if $\left\lvert 1-\varepsilon\gamma\right\rvert-1<0$, which is equivalent to $0<\varepsilon\gamma<2$.

    We thus obtain that $U(x)=|x|$ satisfies Property~\ref{ass:relCompact} when $\beta=2$ and $\varepsilon\gamma<2$, or $0<\beta<2$.
    
    We finish the proof by showing that Property~\ref{ass:notNegativeInf} holds for $U = |x|$. Recalling  \eqref{eq:expressionU}, we can rewrite Property~\ref{ass:notNegativeInf} as
    \begin{equation*}
        \inf_{x\in S}-\log\left(\int_Se^{U(y)-U(x)}a(x,y)dy+r(x)\right)>-\infty.
    \end{equation*}
    This is equivalent to
        \begin{equation}
        \label{eq:supLessInfty}
        \sup_{x\in S}\int_Se^{U(y)-U(x)}a(x,y)dy+r(x)<+\infty.
    \end{equation}
    Because \eqref{eq:intexpUato0} holds and $r(x)\in[0,1]$ for all $x\in\mathbb{R}^d$, we have
       \begin{equation*}
       \limsup_{|x|\to\infty}\int_Se^{U(y)-U(x)}a(x,y)dy+r(x)<+\infty.
    \end{equation*}
    Moreover, because $a(x,y),U(x)$ and $r(x)$ are continuous functions, \eqref{eq:supLessInfty} must hold. This proves Property~\ref{ass:notNegativeInf} in Assumption~\ref{ass:compactSpaceOrLyapunov}, which in turn completes the proof.
\end{proof}
We now move to the last step towards proving Proposition \ref{prop:MALA}: showing that for $\beta >2$, Assumption \ref{ass:compactSpaceOrLyapunov} cannot hold, as the necessary condition \eqref{eq:intAto1} from Lemma \ref{lem:rto0} is violated when $\beta >2$.
\begin{lemma}
\label{lem:beta>2}
    Let $\beta>2$. Then,
    \begin{equation*}
        \lim_{|x|\to\infty}\int_Sa(x,y)dy<1.
    \end{equation*}
\end{lemma}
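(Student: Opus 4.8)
The plan is to invoke Lemma~\ref{lem:equivalentForIntA}, which reduces the claim to the asymptotic sign of $g(x,t)$ from \eqref{eq:MALAg}: I will show that for $\beta>2$ one has $g(x,t)\to-\infty$ for every fixed $t$, and then use dominated convergence in the representation \eqref{eq:intaWithJhat} to conclude $\int_S a(x,y)dy\to 0<1$. The conceptual reason the case $\beta>2$ differs from $\beta\le 2$ is that the MALA drift $-\tfrac{\varepsilon\gamma\beta}{2}|x|^{\beta-2}x$ now grows super-linearly, so the proposal overshoots massively and $|y(x,t)|$ becomes far larger than $|x|$.

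First I would pin down the leading order of $|y(x,t)|$. From \eqref{eq:y2}, for $\beta>2$ the dominant term inside the parenthesis is $\left(\tfrac{\varepsilon\gamma\beta}{2}\right)^2|x|^{2(\beta-2)}$, since $2(\beta-2)>\beta-2>0$, so that
\begin{equation*}
    |y(x,t)|\sim \frac{\varepsilon\gamma\beta}{2}\,|x|^{\beta-1}\to\infty.
\end{equation*}
In the same regime, \eqref{eq:innerPrody} shows the inner product is governed by its second term, giving $\langle x,y(x,t)\rangle\sim-\tfrac{\varepsilon\gamma\beta}{2}|x|^{\beta}$; note in particular that the overshoot makes it \emph{negative} for large $|x|$.

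Next I would read off, term by term, the leading power of $|x|$ in the three contributions to $g(x,t)$. Using the asymptotics above together with the Taylor expansion \eqref{eq:Taylor}, the first term $-\gamma(1-\tfrac{\beta}{2})(|y|^\beta-|x|^\beta)$ is \emph{positive} of order $|x|^{\beta^2-\beta}$; the cross term $-\tfrac{\gamma\beta}{2}(|y|^{\beta-2}-|x|^{\beta-2})\langle x,y\rangle$ is also \emph{positive} of order $|x|^{\beta^2-2\beta+2}$ (two sign flips, from the coefficient and from $\langle x,y\rangle<0$); while the middle term $-\tfrac{\varepsilon}{8}(\gamma\beta)^2(|y|^{2\beta-2}-|x|^{2\beta-2})$ is \emph{negative} of order $|x|^{2(\beta-1)^2}$. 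Comparing exponents, $2(\beta-1)^2-(\beta^2-\beta)=(\beta-1)(\beta-2)>0$ and $2(\beta-1)^2-(\beta^2-2\beta+2)=\beta(\beta-2)>0$ for $\beta>2$, so the negative middle term strictly dominates both positive ones and $g(x,t)\to-\infty$.

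Finally, with $g(x,t)\to-\infty$ for every fixed $t$, the integrand $\min\{1,e^{g(x,t)}\}$ in \eqref{eq:intaWithJhat} converges pointwise to $0$ and is bounded above by $1$; dominated convergence against the probability density $\hat J$ then gives $\lim_{|x|\to\infty}\int_S a(x,y)dy=0<1$, which is more than enough. I expect the main obstacle to be the asymptotic bookkeeping in the middle step: one must correctly identify the sign and exponent of each of the three terms—in particular verifying that the cross term is positive and strictly subleading so that no cancellation can rescue the negative middle term. Once the three exponents are in hand, the concluding comparison is a purely polynomial inequality in $\beta$.
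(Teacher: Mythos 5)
Your proof is correct and follows essentially the same route as the paper: both identify the middle term $-\frac{\varepsilon}{8}(\gamma\beta)^2\left(|y(x,t)|^{2\beta-2}-|x|^{2\beta-2}\right)$ as the dominant contribution to $g(x,t)$, using the asymptotics $|y(x,t)|\sim\frac{\varepsilon\gamma\beta}{2}|x|^{\beta-1}$ obtained from \eqref{eq:y2}, and conclude that $g(x,t)\to-\infty$ for every fixed $t$. Your version is in fact slightly more thorough—the paper asserts the dominance of the middle term without the sign-and-exponent comparison of the other two terms that you carry out—and your dominated-convergence ending gives the stronger conclusion that the limit equals $0$, rather than merely ruling out the value $1$ via Lemma~\ref{lem:equivalentForIntA}.
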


\begin{proof}
    If $\beta>2$, as $|x|\to\infty$ the leading term in $g(x,t)$ is $-\frac{\varepsilon}{8}(\gamma\beta)^2\left(|y(x,t)|^{2\beta-2}-|x|^{2\beta-2}\right)$. 
    Moreover, from \eqref{eq:y2} we see that, as $|x|\to\infty$,
    \begin{equation*}
        |y(x,t)|^2=\left(\frac{\varepsilon\gamma\beta}{2}\right)^2|x|^{2\beta-2}+o\left(|x|^{2\beta-2}\right).
    \end{equation*}
    This implies that $|x|^{2\beta-2}=o\left(|y(x,t)|^{2\beta-2}\right)$. Consequently, the liminf in \eqref{MALAprooflimitge0} becomes
    \begin{align*}
        \liminf_{|x|\to\infty} g(x,t) = \liminf _{|x| \to \infty} \left( -\frac{\varepsilon}{8}(\gamma\beta)^2|y(x,t)|^{2\beta-2} \right)=-\infty.
    \end{align*}
    By Lemma~\ref{lem:equivalentForIntA} we obtain that $\int_Sa(x,y)dy\neq1$. From the definition of $a(x,y)$ as a transition kernel, it cannot hold that $\int_Sa(x,y)dy>1$. This completes the proof.
\end{proof}

\end{appendix}

%
%

\begin{acks}[Acknowledgments]
We thank Prof A.\ Budhiraja (UNC Chapel Hill) for insightful comments and for pointing us to the paper \cite{BD03} in connection with the discussion in Section \ref{sec:conjecture}. We also thank the AE and anonymous reviewers for very thorough and helpful feedback on the first version of this manuscript. Their comments and questions have helped improve the clarity of the paper as well as correct some errors in the initial proofs. 
\end{acks}
\begin{funding}
FM and PN were supported by the Swedish e-Science Research Centre (SeRC). PN was also supported by Wallenberg AI, Autonomous Systems and Software Program (WASP) funded by the Knut and Alice Wallenberg Foundation, and by the Swedish Research Council (VR-2018-07050, VR-2023-03484).
\end{funding}

\bibliographystyle{imsart-number}

\bibliography{bibliography}       


\end{document}